\newcommand{\vf}{\mathfrak{X}}
\newcommand{\g}{\mathfrak{g}}
\newcommand{\gs}{{\g^*}{}}
\newcommand{\LL}{\mathscr{L}}
\newcommand{\st}{^*}
\newcommand{\fvf}{\varphi}
\newcommand{\pulce}{\tilde\varphi}
\newcommand{\leaf}{C\big/\mathord\sim}
\newcommand{\id}{\mathrm{id}}
\newcommand{\pr}{\mathrm{pr}}
\newcommand{\Cinfty}{\mathscr{C}^{\infty}}
\newcommand{\R}{\mathbb{R}}
\newcommand{\Z}{\mathbb{Z}}
\newcommand{\Sections}{\Gamma}
\newcommand{\algebra}[1]{\mathscr{#1}}
\DeclareMathOperator{\DD}{D}
\DeclareMathOperator{\dd}{d}
\newtheorem{theorem}{Theorem}[section]
\newtheorem{lemma}[theorem]{Lemma}
\newtheorem{corollary}[theorem]{Corollary}
\newtheorem{proposition}[theorem]{Proposition}
\theoremstyle{definition}
\newtheorem{definition}[theorem]{Definition}
\theoremstyle{remark}
\newtheorem{remark}[theorem]{Remark}
\numberwithin{equation}{section}
\title{Reduction of pre-Hamiltonian actions}
\author[A. De Nicola]{Antonio De Nicola}
 \address{CMUC, Department of Mathematics, University of Coimbra, 3001-501 Coimbra, Portugal}
 \email{antondenicola@gmail.com}
\author[C. Esposito]{Chiara Esposito}
\address{Faculty of Mathematics and Computer Science,
Department of Mathematics,
Chair of Mathematics X (Mathematical Physics), Julius Maximilian University of W\"{u}rzburg, Germany }
\email{chiara.esposito@mathematik.uni-wuerzburg.de}
\begin{document}

\begin{abstract}
We prove a reduction theorem for the tangent bundle of a Poisson
manifold $(M, \pi)$ endowed with a pre-Hamiltonian action of a Poisson
Lie group $(G, \pi_G)$.  In the special case of a Hamiltonian action
of a Lie group, we are able to compare our reduction to the classical
Marsden-Ratiu reduction of $M$.  If the manifold $M$ is symplectic and
simply connected, the reduced tangent bundle is integrable and its
integral symplectic groupoid is the Marsden-Weinstein reduction of the
pair groupoid $M \times \bar{M}$.
\end{abstract}

\maketitle

\section{Introduction}
Reduction procedures for manifolds with symmetries are known in many
different settings.  A quite general approach, whose origin traces
back to the ideas of Cartan~\cite{Cartan1922}, was considered in
\cite{Sniatycki1972} and then generalized in
\cite{Benenti1983,Benenti1982}. In this approach, the reduction of a
symplectic manifold $(M,\omega)$ is intended as a submersion $\rho
:N\to M_{red}$ of an immersed submanifold $i:N\hookrightarrow M$ onto
another symplectic manifold $(M_{red}, \omega_{red})$ such that
$i^*\omega=\rho^*\omega_{red}$.  In particular $M_{red}$ might be the
space of leaves of the characteristic distribution of $i^*\omega$.
However, the most famous result is the one provided by Marsden and
Weinstein \cite{Marsden1974} in the special case where the submanifold
$N$ consists of a level set of a momentum map associated to the
Hamiltonian action a of Lie group.  One of the possible
generalizations has been introduced by Lu \cite{Lu1990} and concerns
actions of Poisson Lie groups on symplectic manifolds. Afterwards, the
case of Poisson Lie groups acting on Poisson manifolds has been
studied in \cite{Esposito2013}. It is also worth to mention the case of
Manin pairs (that include Dirac and Poisson manifolds as special cases)
treated in \cite{Burstzyn2009} and the supergeometric setting
studied in \cite{Cattaneo2013}.
In this paper we consider the case of Poisson Lie groups acting on
Poisson manifolds.
Such actions appear naturally in the study of $R$ matrices and they encode
the hidden symmetries of classical integrable systems.
An action of a Poisson Lie group $(G, \pi_G)$ is said to be Poisson Hamiltonian if it is generated by an
equivariant momentum map $J: M \to G^*$. We shall focus on a
further generalization of Poisson Hamiltonian actions. The main idea,
introduced by Ginzburg in \cite{Ginzburg1996}, is to consider only the
infinitesimal version of the equivariant momentum map studied by Lu.
An action induced by such an infinitesimal momentum map is what
we call pre-Hamiltonian.  Any Poisson Hamiltonian action is pre-Hamiltonian.
Conversely, we show that any pre-Hamiltonian action is a Poisson action.
However, pre-Hamiltonian actions are strictly more general of Hamiltonian actions,
as shown in \cite{Ginzburg1996} where concrete examples of  pre-Hamiltonian actions
which are not Poisson Hamiltonian were provided.

We obtain a reduction theorem for the tangent bundle of a Poisson manifold endowed
with a pre-Hamiltonian action of a Poisson Lie group.
First, we show that a pre-Hamiltonian action of a Poisson Lie group $(G, \pi_G)$ on
a Poisson manifold $(M, \pi)$ defines a coisotropic submanifold $C$ of the tangent
bundle $TM$.  Hence, we can build up a reduced space by using the theory
of coisotropic reduction.  In fact, given a coisotropic submanifold
$C$ of $TM$, the associated characteristic distribution allows us to
define a leaf space $\leaf$, which we denote by $(TM)_{red}$.
The coisotropic submanifold $C$ is defined by means of a map
$\tilde\varphi$ from $\g$ to the space of 1-forms on $M$ which
preserves the Lie algebra structures and is a cochain map.
We obtain the following
\begin{theorem}
    Let  $\Phi: G\times M \to M$ be a  pre-Hamiltonian action of a Poisson Lie group $G$ on
  a Poisson manifold $(M, \pi)$.
    Then the reduced tangent bundle $(TM)_{red}$ carries a Poisson structure.
    Moreover $(TM)_{red}\to M/G$ is a Lie algebroid.
\end{theorem}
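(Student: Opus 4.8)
The plan is to obtain the Poisson structure directly from coisotropic reduction, and to get the Lie algebroid over $M/G$ by descending the tangent Lie algebroid $TM\to M$ along the bundle projection $\tau\colon TM\to M$ and the orbit map $M\to M/G$, using the reduced Poisson structure to organize and check the bracket. First recall that $TM$ carries the tangent lift $\pi^{T}$ of $\pi$, a fiberwise-linear Poisson bivector over $M$, and that by the construction preceding the theorem the pre-Hamiltonian action makes $C\subseteq(TM,\pi^{T})$ coisotropic: concretely $C$ is the annihilator in $TM$ of the forms $\pulce(\xi)$, i.e.\ $C=\pulce(\g)^{\circ}$, it is invariant under the tangent lift of $\Phi$, and it is a subbundle of $TM\to M$. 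I would first observe that its characteristic distribution $\mathcal F=\pi^{T\sharp}\bigl((TC)^{\circ}\bigr)\big|_{C}$ is involutive, which is automatic for a coisotropic submanifold, and that — under the regularity already implicit in writing $(TM)_{red}=\leaf$, namely locally constant rank of $\pulce$ together with enough regularity of the action that $M/G$ is a manifold — the leaf space is smooth and $C\to(TM)_{red}$ is a surjective submersion. Then the standard coisotropic reduction theorem supplies the Poisson structure for free: writing $I_{C}\subseteq\Cinfty(TM)$ for the vanishing ideal of $C$ and $N(I_{C})=\{f\colon\{f,I_{C}\}\subseteq I_{C}\}$ for its Poisson normalizer, coisotropy gives $\{N(I_{C}),N(I_{C})\}\subseteq N(I_{C})$ and $\{N(I_{C}),I_{C}\}\subseteq I_{C}$, so that $\Cinfty\bigl((TM)_{red}\bigr)\cong N(I_{C})/I_{C}$ is a Poisson algebra. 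I do not expect any difficulty here.

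Next I would exhibit the vector bundle $p\colon(TM)_{red}\to M/G$. The projection $\tau$ restricts to $\tau_{C}\colon C\to M$, and composed with $M\to M/G$ it is constant along $\mathcal F$: the $\tau$-vertical part of $\mathcal F$ consists of the fiberwise-linear directions coming from the Lie subalgebroid $\pulce(\g)\subseteq T^{*}M$ of the cotangent Lie algebroid, while the remaining directions of $\mathcal F$ are sent by $T\tau$ exactly onto the infinitesimal orbit directions $\pi^{\sharp}\pulce(\xi)=\xi_{M}$, and both are invisible after passing to $M/G$. This produces the surjective submersion $p$. Because $\pi^{T}$ is fiberwise linear, $C$ is a subbundle, and $\mathcal F$ is invariant under the fiberwise dilations of $TM$, the leaves of $\mathcal F$ cut each fiber $C_{m}$ along a linear foliation; the resulting quotient vector spaces are matched $G$-equivariantly along orbits, which exhibits $p$ as a vector bundle (local triviality coming from that of $TM\to M$, from local constancy of the ranks in play, and from a slice for the $G$-action), and the reduced dilation vector field shows that $\pi^{T}_{red}$ is again fiberwise linear, now over $M/G$.

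For the Lie algebroid structure, the anchor $\rho\colon(TM)_{red}\to T(M/G)$ is the map induced by $T\pi_{M/G}\circ\iota_{C}\colon C\hookrightarrow TM\to T(M/G)$; as above it annihilates precisely the $\mathcal F$-directions, hence descends, and it is a bundle map over $\id_{M/G}$. For the bracket, the key remark is that the cochain-map property of $\pulce$ forces $\dd\pulce(\xi)$ to equal the image under $\pulce\wedge\pulce$ of the cobracket $\delta\xi\in\Lambda^{2}\g$, hence to lie in the ideal generated by $\pulce(\g)$; thus $\pulce(\g)$ spans a differential ideal and, where it has constant rank, $C$ is an \emph{involutive} distribution on $M$, i.e.\ a Lie subalgebroid of the tangent Lie algebroid $(TM,\br,\id)$. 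The Lie bracket of two $G$-invariant sections of $C\to M$ is again a $G$-invariant section of $C$, so it descends to a section of $(TM)_{red}$; defining $\br$ on $\Sections\bigl((TM)_{red}\bigr)$ in this way, the Leibniz rule over $\rho$ and the Jacobi identity are inherited from $\vf(M)$. Equivalently, and as a check on signs, one verifies that this is the Lie algebroid corresponding to the fiberwise-linear bivector $\pi^{T}_{red}$ over $M/G$ under the standard equivalence between fiberwise-linear Poisson structures and Lie algebroids.

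I expect the main obstacle to be the well-definedness at the last step: that the descended bracket on $\Sections\bigl((TM)_{red}\bigr)$ does not depend on the chosen $G$-invariant lifts to $\Sections(C)$, and that it is compatible with $\rho$. This is precisely where the coisotropy of $C$, the $G$-invariance of $C$, and the full strength of the hypotheses on $\pulce$ — that it is a Lie algebra morphism and a cochain map — must be used simultaneously. A secondary, more technical point is to make explicit the regularity assumptions under which $(TM)_{red}$, $M/G$ and the submersion $p$ are genuinely smooth objects.
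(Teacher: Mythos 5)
Your overall strategy coincides with the paper's: the Poisson structure comes from coisotropic reduction of $C=c^{-1}(0)=\pulce(\g)^{\circ}\subseteq(TM,\pi_{TM})$, and the Lie algebroid is the quotient $C/G\to M/G$ of the Lie subalgebroid $C\to M$ (your involutivity argument via the differential-ideal property of $\pulce(\g)$ is a legitimate variant of the paper's appeal to $c$ being a morphism of Lie bialgebroids). The genuine gap is in the middle: everything in your last two paragraphs rests on identifying the leaves of the characteristic foliation $\mathcal F$ with the orbits of the tangent-lifted $G$-action on $C$ --- this is what makes $(TM)_{red}\to M/G$ a vector bundle and lets a $G$-invariant section of $C\to M$ descend to a section of $(TM)_{red}$ --- and you never prove it. What you do establish (that $T\tau_M$ maps $\mathcal F$ into the orbit directions $\xi_M$, so that $C\to M\to M/G$ is constant on leaves) only gives $\mathcal F\subseteq\ker T(q\circ\tau_M)$, which does not pin down the leaves; and the picture of leaves ``cutting each fiber $C_m$ along a linear foliation'' is off, since for a free action each leaf meets each fiber over an orbit in exactly one point.

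The missing step is the paper's main computation (Theorems \ref{thm:TangentLiftAction} and \ref{thm:TangentReductionG}): using the Tulczyjew isomorphism in the form $\alpha_M\circ T\theta=\dd_T\theta$, one shows that the infinitesimal generator of the tangent lift is $\fvf^T(\xi)=X_{i_T\pulce_\xi}+\pi^\sharp_{TM}\circ i_T\dd\pulce_\xi=X_{c_\xi}+\pi^\sharp_{TM}\circ i_T\dd\pulce_\xi$, and then the cochain identity $\dd\pulce_\xi=\pulce\wedge\pulce\circ\delta(\xi)$ together with the Leibniz rule for $i_T$ shows that $i_T\dd\pulce_\xi$ is a combination of the functions $c_j$, so the correction term vanishes on $C$ and $X_{c_\xi}|_C=\fvf^T(\xi)|_C$. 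This is a second, independent use of the cochain-map hypothesis (you use it only once, for involutivity of $C$ as a distribution on $M$), and it is exactly what upgrades your qualitative description of $\mathcal F$ to the equality $(TM)_{red}=C/G$; without it the descent of the bracket to $\Sections\bigl((TM)_{red}\bigr)$ is not justified.
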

The proof uses the theory
of coisotropic reduction, the Tulczyjew's isomorphisms
\cite{Tulczyjew1974, Tulczyjew1999} and the theory of tangent
derivations \cite{Pidello1987}.

Given a pre-Hamiltonian action, we compute
explicitly the infinitesimal generator of the tangent lift of the action, as
can be seen in the following
\begin{theorem}
    \label{thm:TangentLiftAction-intro}
    Let $\Phi: G\times M \to M$ be a pre-Hamiltonian action of a
    Poisson Lie group with infinitesimal momentum map $\pulce$.
    \begin{itemize}
	\item[(i)] The infinitesimal generator $\fvf^T$ of the tangent lift of $\Phi$ is given by
	$$
	  \fvf^T(\xi)  = X_{i_{T}{\pulce_\xi}} + \pi^\sharp_{TM} \circ i_{T}{\dd\pulce_\xi}.
	$$
	\item[(ii)] If for each $\xi\in\g$, one has $\dd\pulce_{\xi}=0$, then the lifted
	(infinitesimal) action on $(TM, \pi_{TM})$ is Hamiltonian, with
	fiberwise-linear momentum map defined by $c_\xi = i_{T}{\pulce}_\xi$.
    \end{itemize}
\end{theorem}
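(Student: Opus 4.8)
The plan is to reduce everything to the standard calculus of tangent lifts of tensor fields and of tangent derivations \cite{Pidello1987}, using only the defining property of a pre-Hamiltonian action, namely that $\fvf(\xi) = \pi^\sharp\pulce_\xi$. For (i), I would first recall that the infinitesimal generator of the tangent lift $T\Phi$ is the tangent lift of the infinitesimal generator of $\Phi$: if $X = \fvf(\xi)$ has flow $\psi_t$, then $T\psi_t$ is the flow of $\fvf^T(\xi)$, hence $\fvf^T(\xi) = X^T$. Since the action is pre-Hamiltonian, $X = \pi^\sharp\pulce_\xi$, so it suffices to compute $(\pi^\sharp\pulce_\xi)^T$. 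Because $\pi_{TM}$ is the tangent lift $\pi^T$ of $\pi$, and the tangent lift is compatible with the contraction of a $1$-form into a bivector, one has $(\pi^\sharp\alpha)^T = \pi^\sharp_{TM}(\alpha^T)$ for every $1$-form $\alpha$ on $M$; equivalently, under Tulczyjew's isomorphism $T(T^*M)\cong T^*(TM)$ the map $\pi^\sharp_{TM}$ corresponds to the tangent prolongation $T\pi^\sharp$ of $\pi^\sharp\colon T^*M\to TM$. It then remains to express the tangent lift $\pulce_\xi^T$ of the $1$-form $\pulce_\xi$ through $i_T$: by the Cartan-type identity for the tangent-lift derivation $\dd_T$, which on $1$-forms coincides with the tangent lift, $\dd_T = i_T\circ\dd + \dd\circ i_T$, one obtains $\pulce_\xi^T = \dd(i_T\pulce_\xi) + i_T(\dd\pulce_\xi)$. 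Applying $\pi^\sharp_{TM}$ and using $\pi^\sharp_{TM}(\dd h) = X_h$ then gives
$$
  \fvf^T(\xi) = \pi^\sharp_{TM}\bigl(\dd(i_T\pulce_\xi)\bigr) + \pi^\sharp_{TM}\bigl(i_T\dd\pulce_\xi\bigr) = X_{i_T\pulce_\xi} + \pi^\sharp_{TM}\circ i_T\dd\pulce_\xi,
$$
which is (i).

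For (ii), if $\dd\pulce_\xi = 0$ for all $\xi\in\g$, the second term above vanishes and $\fvf^T(\xi) = X_{c_\xi}$ with $c_\xi := i_T\pulce_\xi$, so the lifted infinitesimal action is generated by Hamiltonian vector fields on $(TM,\pi_{TM})$. To see that $\xi\mapsto c_\xi$ is a comomentum map — that is, a Lie algebra morphism into $(\Cinfty(TM), \{\cdot,\cdot\}_{TM})$ — I would invoke the further standard property of the tangent lift of Poisson structures that $i_T$ intertwines the Koszul bracket of $1$-forms with the Poisson bracket of the associated fiberwise-linear functions, namely $\{i_T\alpha, i_T\beta\}_{TM} = i_T([\alpha,\beta]_\pi)$. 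Since the infinitesimal momentum map $\pulce$ preserves the Lie algebra structures, $[\pulce_\xi,\pulce_\eta]_\pi = \pulce_{[\xi,\eta]}$, and therefore
$$
  \{c_\xi, c_\eta\}_{TM} = i_T\bigl([\pulce_\xi,\pulce_\eta]_\pi\bigr) = i_T\bigl(\pulce_{[\xi,\eta]}\bigr) = c_{[\xi,\eta]}.
$$
As each $c_\xi$ is by construction linear on the fibers of $TM\to M$, this exhibits the lifted infinitesimal action on $(TM,\pi_{TM})$ as Hamiltonian with fiberwise-linear momentum map $\xi\mapsto c_\xi$.

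The step I expect to be the main obstacle is the identity $(\pi^\sharp\alpha)^T = \pi^\sharp_{TM}(\alpha^T)$ used in the proof of (i): one has to pin down the precise interplay between the tangent lift of the $1$-form $\pulce_\xi$ and the tangent lift of the Poisson bivector, and it is here that Tulczyjew's isomorphisms — together with the canonical flip on $TTM$ — do the work, identifying $\pi^\sharp_{TM}$, built from $\pi_{TM} = \pi^T$, with the tangent prolongation of $\pi^\sharp$. The remaining ingredients, the Cartan formula $\dd_T = i_T\dd + \dd i_T$ and the bracket identity $\{i_T\alpha, i_T\beta\}_{TM} = i_T[\alpha,\beta]_\pi$, belong to the calculus of tangent lifts and tangent derivations and would be verified in local coordinates.
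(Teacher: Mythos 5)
Your proof is correct and follows essentially the same route as the paper: the identity $(\pi^\sharp\alpha)^T=\pi^\sharp_{TM}(\dd_T\alpha)$ that you flag as the main obstacle is exactly what the paper assembles from $\fvf^T(\xi)=k_M\circ T(\fvf(\xi))$, the defining relation $\pi^\sharp_{TM}\circ\alpha_M=k_M\circ T\pi^\sharp_M$, and Lemma~\ref{lem:Tit} ($\alpha_M\circ T\theta=\dd_T\theta$, proved there in coordinates), after which the Cartan formula $\dd_T=i_T\dd+\dd i_T$ yields the stated decomposition. Your additional verification in (ii) that $\{c_\xi,c_\eta\}_{TM}=c_{[\xi,\eta]}$ is sound and is precisely the content the paper establishes separately in Proposition~\ref{prop:bialg}.
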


Note that in the special case of a symplectic action on a symplectic manifold we always obtain a Hamiltonian
action on the tangent bundle and hence a Marsden-Weinstein reduction of $TM$ can be performed.

In order to relate our reduction to the classical Marsden-Ratiu
reduction, we consider the particular case of a Hamiltonian action and prove the following
\begin{theorem}
  Let $(M , \pi)$ be  a Poisson manifold endowed with a Hamiltonian
  action of a Lie group $G$ and $0\in \g^*$ a regular value of the momentum
  map $J$.
  Then there is a connection dependent isomorphism of vector bundles
  $$
	    (TM)_{red}|_{M_{red}}
	    \cong
	    T (M_{red})+ \mathfrak{\tilde{g}},
  $$
  where $\mathfrak{\tilde{g}}$ is the associated bundle to the
  principal bundle $J^{-1}(0)\to J^{-1}(0)/G$ by the adjoint
  action of $G$ on $\mathfrak{g}$.
\end{theorem}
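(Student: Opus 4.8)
The goal is to establish a connection-dependent isomorphism $(TM)_{red}|_{M_{red}} \cong T(M_{red}) + \tilde{\mathfrak{g}}$ for a Hamiltonian action with $0$ a regular value of $J$. Let me think about what the pieces are.

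We have $M$ Poisson, $G$ acting Hamiltonianly with momentum map $J: M \to \mathfrak{g}^*$. Since $0$ is a regular value, $C_0 := J^{-1}(0)$ is a submanifold, the action of $G$ on it is (assume) free and proper, and $M_{red} = C_0/G$ is the Marsden-Ratiu reduced Poisson manifold.

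Now $(TM)_{red}$ is the leaf space of a coisotropic submanifold $C \subset TM$. For a Hamiltonian action, the map $\tilde\varphi: \mathfrak{g} \to \Omega^1(M)$ is $\tilde\varphi_\xi = dJ_\xi$ where $J_\xi = \langle J, \xi\rangle$. So $C$ should be something like the subset of $TM$ cut out by the conditions... actually I'd guess $C = \{v \in TM : \langle dJ_\xi, v\rangle = 0 \text{ for all } \xi\} \cap (\text{possibly a base condition})$, i.e., $C = TC_0$ restricted appropriately, or $C|_{C_0}$ = the tangent vectors to $M$ based at points of $C_0$ that are tangent to $C_0$. Hmm, need to be careful. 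Actually the coisotropic submanifold is likely $C = \{v \in T_xM : x \in C_0, \; dJ_\xi(v) = 0 \;\forall \xi\} = TC_0$ as a subset of $TM$... but wait, that's only codimension $2\dim\mathfrak g$ if we impose both $x \in C_0$ (codim $\dim\mathfrak g$ on the base) and $dJ(v) = 0$ (codim $\dim\mathfrak g$ in the fiber). That matches a coisotropic submanifold of the symplectic-like... in the Poisson case it's coisotropic w.r.t. $\pi_{TM}$.

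So $C = TC_0 \subset TM$ (the total space of the tangent bundle of $C_0$), and the characteristic distribution is generated by the lifted action infinitesimal generators. The leaf space over $C_0$ quotients by the $G$-action (tangent-lifted) and by the characteristic directions. Let me restrict to the part lying over $M_{red}$.

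**Key steps.**

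\emph{Step 1.} Identify $(TM)_{red}|_{M_{red}}$ concretely. Over $C_0$, $C|_{C_0} = TC_0$. The characteristic distribution of the coisotropic $C$, when we pass to the quotient over $M_{red} = C_0/G$, should involve: (a) the tangent-lifted infinitesimal $G$-action on $TC_0$, and (b) possibly the Hamiltonian vector fields $X_{J_\xi}$ themselves which on $C_0$ are tangent to the $G$-orbits (that's the standard Marsden-Ratiu/Marsden-Weinstein fact: on $J^{-1}(0)$, $X_{J_\xi}$ generates the orbit direction — actually for the ordinary momentum map $X_{J_\xi} = \xi_M$ the infinitesimal generator). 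So the quotient $(TC_0)_{red}$ over $M_{red}$ is $TC_0 / (\text{lifted } G\text{-action})$, which is exactly the Atiyah-type quotient.

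\emph{Step 2.} Use the Atiyah sequence. $C_0 \to M_{red}$ is a principal $G$-bundle. Then $TC_0/G \to M_{red}$ is a vector bundle (the Atiyah algebroid), sitting in the exact sequence $0 \to \tilde{\mathfrak g} \to TC_0/G \to TM_{red} \to 0$ where $\tilde{\mathfrak g} = (C_0 \times \mathfrak g)/G$ is the adjoint bundle. A choice of principal connection on $C_0 \to M_{red}$ splits this sequence, giving $TC_0/G \cong TM_{red} \oplus \tilde{\mathfrak g}$.

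\emph{Step 3.} Match this up with $(TM)_{red}|_{M_{red}}$. I need to check that the coisotropic reduction of $C \subset TM$ restricted over $M_{red}$ coincides with $TC_0/G$. This requires: (i) the characteristic distribution of $C$ is precisely spanned by the tangent lift of the infinitesimal $G$-action (using Theorem \ref{thm:TangentLiftAction-intro} — in the Hamiltonian case, part (i) gives $\varphi^T(\xi) = X_{i_T \tilde\varphi_\xi} + \pi^\sharp_{TM} \circ i_T d\tilde\varphi_\xi$; with $\tilde\varphi_\xi = dJ_\xi$ this is computable), and (ii) on $C_0$ the vertical lift directions and the genuine $G$-orbit directions are accounted for correctly so that no extra quotienting beyond $TC_0/G$ happens.

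**The main obstacle.** The hard part will be Step 3: pinning down exactly what the characteristic distribution of the coisotropic submanifold $C \subset (TM, \pi_{TM})$ is along $C_0$, and showing its leaves intersected with $TC_0$ are exactly the $G$-orbits of the tangent-lifted action. This requires a careful computation with $\pi_{TM}$ (via the Tulczyjew isomorphism / tangent Poisson structure) and with $i_T dJ_\xi$, $i_T J_\xi$; one must verify that the "vertical" part of the characteristic distribution exactly cancels, leaving only the lifted orbit directions, so that the reduced space over $M_{red}$ is the honest Atiyah quotient $TC_0/G$ rather than something larger or smaller. I would also need to verify the isomorphism is well-defined globally (not just fiberwise) and track the connection-dependence: the splitting $TC_0/G \cong TM_{red}\oplus\tilde{\mathfrak g}$ uses a principal connection on $C_0 \to M_{red}$, which is the source of the "connection dependent" qualifier. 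Once Steps 1--2 are in place the rest is bookkeeping with the Atiyah sequence.
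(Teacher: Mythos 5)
Your overall skeleton --- identify the part of $(TM)_{red}$ sitting over $M_{red}$ with the Atiyah quotient $T(J^{-1}(0))/G$ and then split the Atiyah sequence of the principal bundle $J^{-1}(0)\to M_{red}$ by a choice of connection --- is exactly the paper's route (the splitting is quoted from Cendra--Marsden--Ratiu, Lemma~2.4.2). But there is a genuine gap precisely at the point you yourself flag as the ``main obstacle'', and it originates in your identification of the coisotropic submanifold. You settle on $C=TC_0$, i.e.\ you impose both the base condition $J(x)=0$ and the fibre condition $\dd J_x(v)=0$. In the paper $C=c^{-1}(0)$ with $c_\xi(v)=\langle \dd J_{\xi}(\tau_M(v)),v\rangle$, that is $C=(\dd_T J)^{-1}(0)$: only the fibre condition is imposed, over \emph{every} point of $M$, so $C$ has codimension $\dim\mathfrak g$, not $2\dim\mathfrak g$. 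This is not cosmetic. With your $C=TC_0$ (take $M$ symplectic for concreteness), the vanishing ideal contains the functions $J_\xi\circ\tau_M$ as well as the $c_\xi$, so the characteristic distribution has rank $2\dim\mathfrak g$ and contains the vertical lifts of the generators in addition to their complete lifts; quotienting by it yields $T(M_{red})$ alone (tangent reduction commutes with the tangent functor), killing the $\tilde{\mathfrak g}$ summand. There is nothing to ``cancel'': those vertical directions genuinely lie in the characteristic distribution of $TC_0$, and the statement you want would be false for that choice of $C$.

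With the correct $C=(\dd_T J)^{-1}(0)$ the difficulty you anticipate disappears: the ideal is generated by the $c_\xi$ alone, the characteristic distribution is spanned by the $X_{c_\xi}$, and Theorem~\ref{thm:TangentReductionG} already shows that on $C$ these coincide with the tangent-lifted generators, so the leaves are the $G$-orbits and $(TM)_{red}=C/G$ globally. The only computation specific to the comparison theorem is then the pointwise identity $(\dd_T J)^{-1}(0)|_{J^{-1}(0)}=T(J^{-1}(0))$ (for $v$ based at $x\in J^{-1}(0)$ one has $(\dd_T J)(v)=\langle \dd J(x),v\rangle$, which vanishes iff $v$ is tangent to the level set, $0$ being a regular value), after which the Atiyah splitting finishes the proof exactly as in your Step~2. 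In short: right decomposition, right final lemma, but the central identification of the reduced space over $M_{red}$ rests on a wrong candidate for $C$ and is left unresolved.
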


Furthermore, by using
\cite{Fernandes2009b,Fernandes2009} we provide an interpretation of
the reduced tangent bundle in terms of symplectic groupoids.  In
particular, we consider the case of a symplectic action of a Lie group
$G$ on a symplectic manifold $M$.  On the one hand, we show that in
this case the lifted action on the tangent bundle $TM$ is Hamiltonian
so that we obtain a reduced tangent bundle $(TM)_{red}$ which is a
symplectic manifold.  On the other hand, it follows from
\cite{Mikami1988} that the symplectic action on $(M,\omega)$ can
be lifted to a Hamiltonian action on the corresponding symplectic
groupoid that can be identified with the fundamental groupoid $\Pi (M)
\rightrightarrows M$ of M.  This implies that the symplectic groupoid
can be reduced via Marsden-Weinstein procedure to a new symplectic
groupoid $(\Pi (M))_{red}\rightrightarrows M/G$. We prove that in this
case our reduced tangent bundle $(TM)_{red}$ is the Lie algebroid
corresponding to the reduced symplectic groupoid $(\Pi (M))_{red}$.
More precisely,
\begin{theorem}
  Given a free and proper symplectic action of a Lie group $G$ on
  a symplectic manifold $(M, \omega)$, we have
  $$
      A((\Pi(M))_{red})
      \cong
      (TM)_{red}.
  $$
\end{theorem}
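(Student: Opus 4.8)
The plan is to show that both sides of the claimed isomorphism are Lie algebroids over $M/G$ and that they agree. Recall the two constructions entering the statement. On the one hand, given a free and proper symplectic action of $G$ on the symplectic manifold $(M,\omega)$, by \cite{Mikami1988} this action lifts to a Hamiltonian action on the symplectic groupoid of $M$, which (since $M$ is symplectic) may be identified with the fundamental groupoid $\Pi(M)\rightrightarrows M$; performing Marsden--Weinstein reduction on $\Pi(M)$ produces a symplectic groupoid $(\Pi(M))_{red}\rightrightarrows M/G$, whose Lie algebroid is $A((\Pi(M))_{red})$. On the other hand, by Theorem~\ref{thm:TangentLiftAction-intro}(ii), since $\omega$ is symplectic the infinitesimal momentum map satisfies $\dd\pulce_\xi=0$ for all $\xi$ (in the symplectic case $\pulce_\xi$ is closed, indeed exact locally), so the tangent-lifted action on $(TM,\pi_{TM})$ is Hamiltonian; applying our coisotropic reduction (the first Theorem of the excerpt) yields the reduced tangent bundle $(TM)_{red}$, which is a Lie algebroid over $M/G$.

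\smallskip

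The key steps, in order, would be: (1) Identify $A(\Pi(M))$ with $TM$ as a Lie algebroid, using that $\Pi(M)$ is the source-simply-connected symplectic groupoid integrating the cotangent Lie algebroid $T^*M\cong TM$ (the isomorphism via $\omega^\sharp$), so that the Lie algebroid of the \emph{unreduced} groupoid is exactly $TM$ with its induced bracket and anchor. (2) Show that the Lie functor $A(-)$ commutes with Marsden--Weinstein reduction in this setting: the lifted Hamiltonian action of $G$ on $\Pi(M)$ restricts to the unit manifold $M\subset\Pi(M)$ as the original action, and differentiating the momentum map for the groupoid action along the units recovers the momentum data on $TM=A(\Pi(M))$; hence reducing the groupoid at level $0$ and then taking $A$ should give the same outcome as reducing $A(\Pi(M))=TM$ by the induced infinitesimal momentum data. (3) Match this infinitesimal reduction of $TM$ with our coisotropic reduction $(TM)_{red}$: both quotient $TM$ by the same characteristic/momentum distribution over $M/G$, so the resulting Lie algebroids coincide. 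Combining (1)--(3) gives $A((\Pi(M))_{red})\cong (TM)_{red}$.

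\smallskip

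The main obstacle I expect is step (2): making precise and rigorous the claim that ``the Lie algebroid functor commutes with Marsden--Weinstein reduction'' for symplectic groupoids. One must check that the momentum map for the lifted $G$-action on $\Pi(M)$ is a groupoid morphism (valued in $G^*$, here just $\mathfrak g^*$ since $G$ acts symplectically), that $0$ is a regular value and the $G$-action on the zero level set is free and proper so that the reduced object is again a symplectic groupoid over $M/G$ (here one invokes \cite{Fernandes2009b,Fernandes2009} for the groupoid-level reduction), and then that applying $A(-)$ to the reduced groupoid yields precisely the quotient of $TM$ by the infinitesimal generators of the lifted tangent action together with the kernel of the linearized momentum map — which is by construction the characteristic distribution of the coisotropic submanifold $C\subset TM$. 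The freeness and properness hypotheses on the original action are exactly what guarantee all the regularity needed at both the groupoid and the algebroid level, so the proof reduces to a careful bookkeeping of the two reduction procedures and a verification that the Poisson/Lie-algebroid structures produced on the common underlying manifold $(TM)_{red}$ agree.
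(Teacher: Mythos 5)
Your plan takes a genuinely different route from the paper, and the step you yourself flag as the main obstacle --- that the Lie algebroid functor $A(-)$ commutes with Marsden--Weinstein reduction of the symplectic groupoid --- is precisely the part that is never carried out. As written, step (2) is a list of things that ``one must check'' (that the momentum map on $\Pi(M)$ interacts correctly with the groupoid structure, that reducing and then differentiating equals differentiating and then reducing, that the resulting distribution on $TM=A(\Pi(M))$ coincides with the characteristic distribution of $C$), and none of these verifications is supplied. Since this commutation statement carries the entire content of the theorem in your formulation, the proposal is a proof outline with its central step missing rather than a proof.

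The paper circumvents exactly this difficulty: it never compares $A(\Pi(M))$ with $A((\Pi(M))_{red})$ directly. Instead it chains three facts already established. First, by Theorem~\ref{thm:OidAction} the reduced groupoid $(\Pi(M))_{red}=J^{-1}(0)/G$ is a symplectic groupoid integrating the symplectic manifold $M/G$, so its Lie algebroid is forced to be $T^*(M/G)$ --- no analysis of how $A(-)$ interacts with reduction is needed. Second, by classical cotangent bundle reduction (Theorem~\ref{thm:CotangentAction}) one has $T^*(M/G)\cong (T^*M)_{red}$. Third, by Lemma~\ref{lem:CotangentTangent} the isomorphism $\omega^\flat$ intertwines the tangent and cotangent momentum maps up to sign, giving $(T^*M)_{red}\cong (TM)_{red}$. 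If you want to salvage your route, the cleanest fix is to replace your step (2) by the first of these observations: since $M/G$ is symplectic and $(\Pi(M))_{red}$ integrates it, $A((\Pi(M))_{red})=T^*(M/G)$, and the remaining work reduces to the purely finite-dimensional identifications above.
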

If $M$ is simply connected, this is just the reduction of the pair
groupoid $M \times \bar{M}$.

%\begin{small}

\subsection*{Acknowledgements}
  The authors are grateful to Stefan Waldmann and to Henrique Burstzyn
  for their inspiring seminars about coisotropic reduction and
  symplectic groupoids.  Thanks to O\v{g}ul Esen, Yannick Voglaire and
  Ivan Yudin for their helpful comments.

\subsection*{Funding}
Research partially supported by CMUC -- UID/MAT/00324/2013, funded by the Portuguese Government through FCT/MCTES and co-funded by the European Regional Development Fund through the Partnership Agreement PT2020, by MICINN (Spain) grants MTM2011-15725-E, MTM2012-34478 (A.D.N.).

%\end{small}

\section{Hamiltonian actions and coisotropic reduction}
\label{sec:Hamiltonian}

  In this section we recall some well-known results regarding
  reduction procedures for Hamiltonian actions and for the more
  general case of coisotropic submanifolds which will be used in the
  following sections.

  Let $G$ be a Lie group and $(M , \pi)$ a Poisson manifold. An action
  $\Phi: G \times M \to M$ is said to be \textbf{canonical} if it
  preserves the Poisson structure $\pi$ on $M$.
  Let $\fvf: \g \to \Gamma(TM)$ be the infinitesimal generator of the action.
  In order to perform a reduction we need to introduce the notion of momentum map.
  \begin{definition}%[Momentum map]
      A \textbf{momentum map} for a canonical action of $G$ on $M$ %of a Lie group $G$ on a  Poisson manifold $(M , \pi)$
      is a map $J:M \to \g^*$
      such that it generates the action by
      $$
      \fvf(\xi) = \pi^\sharp(\dd J_\xi),%X_{J_\xi} = \{ J_\xi, \cdot \}
      $$
      where $J_\xi:M\to \R$ is defined by $J_\xi(p)=\langle J(p),\xi\rangle$, for any $p\in M$ and $\xi\in\g$.
    \end{definition}
A momentum map $J:M \to \g^*$ is said to be \textbf{equivariant} if it
is a Poisson map, where $\g^*$ is endowed with the so-called Lie
Poisson structure \cite[Sec. 3]{CannasdaSilva1999}.  Finally, a
canonical action is said to be \textbf{Hamiltonian} if it is generated
by an equivariant momentum map.

  A reduction theorem for symplectic manifolds with respect to
  Hamiltonian group action was proven in \cite{Marsden1974}. It
  extends in a straightforward way to the case of Poisson manifolds
  which we now recall.
  \begin{theorem}[\cite{Marsden1986}]%%Example B, page 165
  \label{thm:MR}
    Let $(M , \pi)$ be  a Poisson manifold endowed with a free and proper Hamiltonian
    action of a Lie group $G$  and assume that $0\in \g\st$ is a regular value
    for the momentum map $J:M \to \g^*$. Then the reduced space
    $$
    M_{red} = J^{-1}(0)/G
    $$
    is a Poisson manifold.
  \end{theorem}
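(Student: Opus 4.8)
The statement is that if $(M,\pi)$ has a free and proper Hamiltonian $G$-action with equivariant momentum map $J$, and $0$ is a regular value, then $M_{red} = J^{-1}(0)/G$ inherits a Poisson structure. Let me sketch how I would prove this.

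Let me set up notation. Write $i: J^{-1}(0) \hookrightarrow M$ for the inclusion and $p: J^{-1}(0) \to M_{red}$ for the quotient projection. Since $0$ is a regular value, $N := J^{-1}(0)$ is a submanifold. Since the action is free and proper and preserves $N$ (by equivariance, as $0$ is a fixed point of the coadjoint action), $M_{red}$ is a smooth manifold and $p$ is a surjective submersion. The goal is to define a bracket on $\Cinfty(M_{red})$ and check Jacobi.

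Here's the plan I would carry out:

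First, I would characterize the pullback $p^*\Cinfty(M_{red})$ inside $\Cinfty(N)$ as the $G$-invariant functions $\Cinfty(N)^G$, which is standard for a free proper action. Second — the key step — I would show that every $G$-invariant function on $N$ extends to a $G$-invariant function on $M$, at least locally, and that two such extensions differ by a function in the ideal $\mathcal{I}_N$ of functions vanishing on $N$; more precisely I want: for $\bar f, \bar g \in \Cinfty(M_{red})$, choose $G$-invariant extensions $F, H \in \Cinfty(M)^G$ of $p^*\bar f, p^*\bar g$, and define
$$
\{\bar f, \bar g\}_{red} \circ p := \{F, H\}_\pi\big|_N.
$$
Third, I would verify this is well-defined: (a) $\{F,H\}_\pi$ is $G$-invariant since the action is canonical and $F,H$ are invariant; (b) it is independent of the choice of extensions. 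For (b), the crucial observation is that $N = J^{-1}(0)$ is coisotropic: its conormal bundle is spanned by the differentials $dJ_\xi$, and $\pi^\sharp(dJ_\xi) = \fvf(\xi)$ is tangent to $N$ (again because $J$ is equivariant, so the flow of $\fvf(\xi)$ preserves level sets of $J$). Hence if $F' = F + a$ with $a \in \mathcal{I}_N$, then $\{F', H\}_\pi - \{F,H\}_\pi = \{a, H\}_\pi$, and on $N$ this equals $da(\pi^\sharp(dH))$; since $H$ is $G$-invariant, $dH$ annihilates the orbit directions $\fvf(\xi)$ on $N$, which by coisotropy of $N$ are exactly $\pi^\sharp(\mathrm{Ann}(TN))$, so $\pi^\sharp(dH)|_N \in TN$, i.e. $\pi^\sharp dH$ is tangent to $N$; then $da(\pi^\sharp dH)|_N = 0$ because $a$ vanishes on $N$ and $\pi^\sharp dH$ is tangent to $N$. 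The same argument handles changing $H$. Fourth, once well-definedness is established, bilinearity, antisymmetry, the Leibniz rule, and the Jacobi identity all descend immediately from the corresponding properties of $\{\,\cdot\,,\,\cdot\,\}_\pi$ on $M$, since $p$ is a surjective submersion and the construction is via pullback.

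The main obstacle is the well-definedness step, and within it the two technical facts that make it work: that $N = J^{-1}(0)$ is coisotropic in $(M,\pi)$ — equivalently that the Hamiltonian vector fields of the components $J_\xi$ are tangent to $N$, which follows from equivariance of $J$ — and that $G$-invariant functions on $N$ admit $G$-invariant extensions to $M$ (obtained from arbitrary smooth extensions by averaging, using properness of the action, or working in a $G$-invariant tubular neighborhood of $N$). Everything else is formal transport of structure along the submersion $p$. I would also remark that this bracket is independent of the choices precisely because we only ever evaluate on $N$ and only feed in invariant functions, so the argument is genuinely the coisotropic-reduction argument specialized to the level set of an equivariant momentum map.
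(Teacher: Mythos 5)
Your sketch is correct. The paper does not actually prove Theorem \ref{thm:MR} (it is quoted from the literature), but it frames it exactly as you do at the end of Section \ref{sec:Hamiltonian}: $J^{-1}(0)$ is coisotropic, the characteristic leaves coincide with the $G$-orbits, and the Poisson structure on the quotient is the one supplied by the general coisotropic reduction of Theorem \ref{thm:CoisotropicReduction}; your well-definedness argument via $G$-invariant extensions is the standard way of carrying out that coisotropic-reduction step.
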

  Now we briefly review a more general procedure, called coisotropic
  reduction.  The main idea, due to Weinstein \cite{Weinstein1988}, is
  that given a Poisson manifold and a coisotropic submanifold, one can
  always build up a reduced space.  Some nice reviews of this theory can
  be found in \cite{Bordemann2005}, \cite{Bordemann2000} and
  \cite{Cattaneo2005}.  The reduction of a Poisson manifold with respect
  to a Hamiltonian group action as well as coisotropic reduction can
  be recovered as special cases of reduction by distributions
  \cite{Marsden1986,Falceto2008,Jotz2009}.

  Let $(M, \pi)$ be a Poisson manifold and $C\subseteq M$ a submanifold.
  We denote by
  \begin{equation}
    \mathscr{I}_C = \{ f\in \Cinfty(M) : f|_C =0 \}
  \end{equation}
  the multiplicative ideal of the Poisson algebra $\Cinfty(M)$. It is known that
  $C$ is coisotropic if and only if $\mathscr{I}_C $ is a Poisson subalgebra.
  From now on, we assume that $C$ is a regular closed submanifold, so we have
  the identification
  $$
    \Cinfty(M)/\mathscr{I}_C \cong \Cinfty(C).
  $$
   Assume that $(M, \pi)$ is a Poisson manifold and $C \subseteq M$ is a coisotropic
   submanifold.   From the properties of coisotropic manifolds, we know that there always exists
   a characteristic distribution on $C$, which is spanned by the Hamiltonian vector
  fields $X_f$ associated to $f \in \mathscr{I}_C $. This distribution is integrable, so we can define
  the leaf space
  \[
  M_{red}\,:=\, \leaf\,.
  \]
  We assume that the corresponding foliation is simple, that is $M_{red}$ is a
  smooth manifold and the projection map
  \begin{equation}
    p: C \to M_{red}
  \end{equation}
  is a surjective submersion. The manifold $M_{red}$ is called the
  \textbf{reduced manifold}.  One can show that $M_{red}$ is a Poisson
  manifold. More precisely, the following results hold (see
  \cite{Sniatycki2013,Sniatycki1983}).
  \begin{proposition}%[Algebraic reduction]
    \label{prop:AlgebraicReduction}
    Let $(M, \pi)$ be a Poisson manifold and $C \subseteq M$ a coisotropic
    submanifold.
    \begin{itemize}
	\item[(i)] $\mathscr{B}_C := \{ f\in \Cinfty(M) : \{f,
          \mathscr{I}_C \}\subseteq \mathscr{I}_C \}$ is a Poisson
          subalgebra of $\algebra{A}$ containing $\mathscr{I}$.
	\item[(ii)] $\mathscr{I}_C \subseteq \mathscr{B}_C$ is a
          Poisson ideal and $\mathscr{B}_C$ is the largest Poisson
          subalgebra of $\Cinfty(M)$ with this feature
	\item[(iii)] $\Cinfty(M)_{red} := \mathscr{B}_C/\mathscr{I}_C$ is a Poisson algebra.
    \end{itemize}
  \end{proposition}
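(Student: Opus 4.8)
The plan is to prove the three parts purely algebraically; the only input beyond the Leibniz and Jacobi identities is the already-recalled fact that coisotropy of $C$ is equivalent to $\mathscr{I}_C$ being a Poisson subalgebra of $\Cinfty(M)$. For (i), I would first note that $\mathscr{B}_C$ is evidently a linear subspace and is closed under multiplication by the Leibniz rule: for $f,g\in\mathscr{B}_C$ and $h\in\mathscr{I}_C$ one has $\{fg,h\}=f\{g,h\}+\{f,h\}g$, and both summands lie in $\mathscr{I}_C$ since $\{g,h\},\{f,h\}\in\mathscr{I}_C$ and $\mathscr{I}_C$ is a multiplicative ideal. Closure under the bracket is the Jacobi identity: for $f,g\in\mathscr{B}_C$ and $h\in\mathscr{I}_C$,
$$\{\{f,g\},h\}=\{f,\{g,h\}\}-\{g,\{f,h\}\},$$
and since $\{g,h\},\{f,h\}\in\mathscr{I}_C$, applying the defining property of $f$ resp. $g$ once more puts both terms in $\mathscr{I}_C$. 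Finally $\mathscr{I}_C\subseteq\mathscr{B}_C$ is exactly the inclusion $\{\mathscr{I}_C,\mathscr{I}_C\}\subseteq\mathscr{I}_C$, i.e. coisotropy of $C$.

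For (ii), $\mathscr{I}_C$ is an ordinary ideal of $\mathscr{B}_C$, being one in all of $\Cinfty(M)$, and $\{\mathscr{B}_C,\mathscr{I}_C\}\subseteq\mathscr{I}_C$ holds by the very definition of $\mathscr{B}_C$; hence $\mathscr{I}_C$ is a Poisson ideal of $\mathscr{B}_C$. For maximality, let $\mathscr{D}\subseteq\Cinfty(M)$ be any Poisson subalgebra containing $\mathscr{I}_C$ as a Poisson ideal. Then $\{f,\mathscr{I}_C\}\subseteq\mathscr{I}_C$ for every $f\in\mathscr{D}$, which is precisely the condition $f\in\mathscr{B}_C$; therefore $\mathscr{D}\subseteq\mathscr{B}_C$.

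For (iii), the quotient $\Cinfty(M)_{red}=\mathscr{B}_C/\mathscr{I}_C$ inherits a commutative associative product because $\mathscr{I}_C$ is a multiplicative ideal of $\mathscr{B}_C$. The bracket descends because the ambiguities are controlled: replacing $f,g\in\mathscr{B}_C$ by $f+a$ and $g+b$ with $a,b\in\mathscr{I}_C$ changes $\{f,g\}$ by $\{f,b\}+\{a,g\}+\{a,b\}$, and all three terms lie in $\mathscr{I}_C$ — the first two because $f,g\in\mathscr{B}_C$, the last because $\mathscr{I}_C$ is a Poisson subalgebra. Antisymmetry, the Jacobi identity and the Leibniz rule then pass to the quotient verbatim, so $\Cinfty(M)_{red}$ is a Poisson algebra.

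There is no serious obstacle here: the whole argument is a diagram chase with the Leibniz and Jacobi identities, and the sole place the coisotropy hypothesis enters is in guaranteeing $\{\mathscr{I}_C,\mathscr{I}_C\}\subseteq\mathscr{I}_C$, which is used both for the inclusion $\mathscr{I}_C\subseteq\mathscr{B}_C$ in (i) and for well-definedness of the reduced bracket in (iii). The only point deserving a moment's care is precisely this well-definedness check in (iii); everything else is formal.
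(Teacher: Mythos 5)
Your proof is correct and complete. The paper does not prove this proposition itself---it quotes it from the cited references---and your argument is exactly the standard verification one finds there: the Leibniz and Jacobi identities give closure of $\mathscr{B}_C$ under product and bracket, the definition of $\mathscr{B}_C$ gives both the Poisson-ideal property and maximality, and the coisotropy condition $\{\mathscr{I}_C,\mathscr{I}_C\}\subseteq\mathscr{I}_C$ enters precisely at the two places you identify (the inclusion $\mathscr{I}_C\subseteq\mathscr{B}_C$ and the well-definedness of the reduced bracket).
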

  The relation between $\Cinfty(M_{red})$ and $\Cinfty(M)_{red}$ is
  given by the following theorem.
  \begin{theorem}%[Coisotropic reduction]
    \label{thm:CoisotropicReduction}
    Let $M$ be a Poisson manifold and $C$ a closed regular coisotropic
    submanifold defining a simple foliation, so that
    $$
	p: C \to   M_{red}
    $$ is a surjective submersion. Then there exists a Poisson
        structure on $M_{red}$ such that $\Cinfty (M)_{red}$ and
        $\Cinfty (M_{red})$ are isomorphic as Poisson algebras.
  \end{theorem}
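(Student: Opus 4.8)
The plan is to identify both $\Cinfty(M)_{red}$ and $\Cinfty(M_{red})$ with one and the same subalgebra of $\Cinfty(C)$ --- the functions that are constant along the characteristic foliation --- and then to transport the Poisson bracket. First I would record the two pictures. On one side, since $C$ is closed and regular, restriction gives $\Cinfty(M)/\mathscr{I}_C\cong\Cinfty(C)$, so Proposition~\ref{prop:AlgebraicReduction} realises $\Cinfty(M)_{red}=\mathscr{B}_C/\mathscr{I}_C$ as the subalgebra $\overline{\mathscr{B}}\subseteq\Cinfty(C)$ of all $f|_C$ with $\{f,\mathscr{I}_C\}\subseteq\mathscr{I}_C$. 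On the other side, because $p\colon C\to M_{red}$ is a surjective submersion with connected fibres, $p^*\colon\Cinfty(M_{red})\to\Cinfty(C)$ is an injective morphism of algebras whose image is exactly the set of functions annihilated by every vector field tangent to the fibres of $p$, i.e.\ by the characteristic distribution of $C$.

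The key step is then the identity $\overline{\mathscr{B}}=p^*\bigl(\Cinfty(M_{red})\bigr)$ inside $\Cinfty(C)$. For $\subseteq$: if $\{f,\mathscr{I}_C\}\subseteq\mathscr{I}_C$, then for each $g\in\mathscr{I}_C$ one has $X_g(f)|_C=-\{f,g\}|_C=0$; since by coisotropy the fields $X_g$, $g\in\mathscr{I}_C$, are tangent to $C$ and span its characteristic distribution, $f|_C$ is leafwise constant, hence $f|_C=p^*\bar f$ for a unique $\bar f\in\Cinfty(M_{red})$. For $\supseteq$: given $\bar f\in\Cinfty(M_{red})$, the function $p^*\bar f$ on $C$ extends to some $f\in\Cinfty(M)$ --- here I use that $\Cinfty(M)\to\Cinfty(C)$ is onto, as $C$ is a closed regular submanifold --- and running the previous computation backwards gives $\{f,g\}|_C=-X_g(f)|_C=0$ for every $g\in\mathscr{I}_C$, so $f\in\mathscr{B}_C$. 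Consequently the identity of $\Cinfty(C)$ restricts to an isomorphism of commutative algebras $\Psi\colon\Cinfty(M)_{red}\to\Cinfty(M_{red})$, determined by $\Psi([f])=\bar f$ whenever $f|_C=p^*\bar f$.

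It then remains to equip $M_{red}$ with a Poisson structure making $\Psi$ a Poisson map, and for this I would simply transport the bracket of Proposition~\ref{prop:AlgebraicReduction}(iii): for $\bar f,\bar g\in\Cinfty(M_{red})$ pick representatives $f,g\in\mathscr{B}_C$, observe $\{f,g\}\in\mathscr{B}_C$ since $\mathscr{B}_C$ is a Poisson subalgebra, and let $\{\bar f,\bar g\}_{red}$ be the unique function on $M_{red}$ with $p^*\{\bar f,\bar g\}_{red}=\{f,g\}|_C$. Independence of the chosen representatives is precisely the content of Proposition~\ref{prop:AlgebraicReduction}(ii) (that $\mathscr{I}_C$ is a Poisson ideal in $\mathscr{B}_C$); skew-symmetry, the Leibniz rule and the Jacobi identity for $\{\ ,\ \}_{red}$ are inherited pointwise from those of $\{\ ,\ \}$ on $M$. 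Being a biderivation of $\Cinfty(M_{red})$, the bracket $\{\ ,\ \}_{red}$ is given by a bivector field $\pi_{red}$, and the Jacobi identity makes $(M_{red},\pi_{red})$ a Poisson manifold; by construction $\Psi$ intertwines the bracket on $\Cinfty(M)_{red}$ with $\{\ ,\ \}_{red}$, giving the asserted isomorphism of Poisson algebras.

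The step I expect to be the genuine obstacle is the surjectivity half of the key identity: producing, for a prescribed smooth leafwise-constant function on $C$, an honest smooth extension to all of $M$. This is exactly where both hypotheses on $C$ are used --- closedness and regularity so that $\Cinfty(M)\to\Cinfty(C)$ is surjective, and simplicity of the characteristic foliation so that $p^*$ surjects onto the leafwise-constant functions (via connectedness of the fibres of $p$). Everything else is formal bookkeeping on top of Proposition~\ref{prop:AlgebraicReduction}.
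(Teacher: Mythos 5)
The paper does not actually prove Theorem~\ref{thm:CoisotropicReduction}: it is recalled as a known result with a pointer to the literature (\'Sniatycki--Weinstein and related references), so there is no in-text argument to compare against. Your proof is correct and is the standard one: identifying $\mathscr{B}_C/\mathscr{I}_C$ with the leafwise-constant functions on $C$ via the two inclusions (using that $X_g|_C$, $g\in\mathscr{I}_C$, is tangent to $C$ and spans the characteristic distribution, and that restriction $\Cinfty(M)\to\Cinfty(C)$ is onto for a closed regular submanifold), matching these with $p^*\Cinfty(M_{red})$ because the fibres of $p$ are the leaves and hence connected, and then transporting the bracket of Proposition~\ref{prop:AlgebraicReduction}, whose well-definedness is exactly the Poisson-ideal property of $\mathscr{I}_C$ in $\mathscr{B}_C$. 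You also correctly isolate the only points where the hypotheses enter (closedness/regularity for the extension step, simplicity of the foliation for descent to $M_{red}$), and the final passage from a skew biderivation satisfying Jacobi to a Poisson bivector on $M_{red}$ is standard.
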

  This proves that $M_{red}$ is a Poisson manifold.
  Finally, note that the coisotropic reduction admits as a special case
  the reduction with respect a Hamiltonian group action.
  In this case, the coisotropic submanifold is given by
  the preimage of a regular value of a momentum map. More precisely,
  consider a canonical action $\Phi: G \times M \to M$ generated by an
  $ad^*$-equivariant momentum map $J : M \to \g^*$.
  If $\mu \in \g^*$ is a regular value of $J$ and is $ad^*$-invariant, then
  \begin{equation}
	C_\mu = J^{-1}( \mu)\subseteq  M
  \end{equation}
  is either empty or a coisotropic submanifold. Then the leaf space
  $C_\mu \big/\mathord\sim$ coincides with the orbit space $C_\mu/G$
  (see e.g. \cite{Bordemann2000}), so we get the reduced space of
  Theorem \ref{thm:MR}.

%%%%%%%%%%%%%%%%%%%%%%%%%%%%%%%%%%%%%%%%%%%%%%%%%%%%%%%%%%%%%%%%%%%%%%%%%%%%%%%%%%%%%%%%%%%%%%%%%%%%%%%%%%%%%%%%%%%%%%%%%%%%%%%%%%%%%%%%%%%%%%%%%%%%%%%%%%%%%%%%%%%%%%%%%%%%%%%%%%%%%%%%%%%%%%%%%%%%%%%%%%%%%%%

\section{Pre-Hamiltonian actions}\label{sec:mm}

In this section we introduce a generalization of Hamiltonian actions
in the setting of Poisson Lie groups acting on Poisson manifolds.  For
this reason we first recall some basic notions.  A \textbf{Poisson Lie
  group} is a pair $(G,\pi_G)$, where $G$ is a Lie group and $\pi_G$
is a multiplicative Poisson structure.  The corresponding
infinitesimal object is given by a \textbf{Lie bialgebra}, i.e. the
Lie algebra $\g$ corresponding to the Lie group $G$, equipped with the
1-cocycle,
\begin{equation}
    \delta = \dd_e\pi_G : \g\rightarrow  \g\wedge \g.
\end{equation}
If $G$ is connected and simply connected there is a one-to-one
correspondence between Poisson Lie groups and Lie bialgebras (known as
Drinfeld's principle \cite{Drinfeld1983}).  When $(\g,\delta)$ is a
Lie bialgebra, the $1$-cocycle $\delta$ gives a Lie algebra structure
on $\gs$, while the Lie bracket of $\g$ gives a $1$-cocycle
$\delta^*$on $\g^*$, so that $(\gs,\delta^*)$ is also a Lie
bialgebra. Thus, we can define the \textbf{dual Poisson Lie group}
$(G^*, \pi_{G^*})$ as the (connected and simply connected) Poisson Lie
group associated to the Lie bialgebra $(\gs,\delta^*)$. From now on we
assume $G$ to be connected and simply connected in order to get the
one-to-one correspondence stated above.
\begin{definition}%[Poisson action]
  An action of $(G,\pi_G)$ on $(M,\pi)$ is said to be a
  \textbf{Poisson action} if the map $\Phi:G\times M\rightarrow M$ is
  Poisson, that is
  \begin{equation}
      \lbrace f \circ \Phi, g \circ \Phi \rbrace_{G \times M} =
      \lbrace f,g \rbrace_M \circ \Phi, \qquad \forall f,g\in
      \Cinfty(M)
  \end{equation}
  where the Poisson structure on $G\times M$ is given by $\pi_G\oplus\pi$.
\end{definition}
It is evident that the above definition generalizes the notion of canonical
action.

\begin{definition}[\cite{Lu1990,Lu1991}]%[Momentum map, \cite{Lu1990}, \cite{Lu1991}]
    \label{def:MomentumMap}
    A \textbf{momentum map} for the Poisson action $\Phi:G\times
    M\rightarrow M$ is a map $J: M\rightarrow G^*$ such that
    \begin{equation}
    \label{eq:MomentumMap}
	\fvf(\xi) = \pi^{\sharp}(J^*(\theta_{\xi})),
    \end{equation}
    where $\fvf: \g \to \Gamma(TM)$ denotes the infinitesimal
    generator of the action, $\theta_{\xi}$ is the left invariant
    1-form on $G^*$ defined by the element $\xi \in \g = (T_eG^*)^*$
    and $J^*$ is the cotangent lift of $J $.
\end{definition}

\begin{definition}%[Poisson Hamiltonian action]
    Let $J : M \to G^*$ be a momentum map of the action $\Phi$. Then,
    \begin{itemize}
	\item[(i)] $J$ is said to be \textbf{$G$-equivariant} if it is
          a Poisson map, i.e.
	\begin{equation}
	    J_*\pi = \pi_{G^*},
	\end{equation}
	\item[(ii)] $\Phi$ is said to be a \textbf{Poisson Hamiltonian
          action} if it is a Poisson action induced by a
          $G$-equivariant momentum map.
    \end{itemize}
\end{definition}
This definition generalizes Hamiltonian actions in the canonical
setting. Indeed, we notice that, if the Poisson structure on $G$ is
trivial, the dual $G^*$ corresponds to the dual of the Lie algebra
$\g^*$ and the 1-form $J^*(\theta_{\xi})$ is then exact.  Thus, it
recovers the usual definition of momentum map $J : M \to \g^*$ for
Hamiltonian actions in the canonical setting since $\fvf(\xi)$ is a
Hamiltonian vector field.  As pointed out by Ginzburg in
\cite{Ginzburg1996}, in many cases it is enough to consider the
infinitesimal version of the $G$-equivariant momentum map, which is a
map from the Lie bialgebra $\g$ to the space of 1-forms on $M$. Recall
that a Poisson structure $\pi$ on a manifold $M$ defines a Lie bracket
$[\cdot,\cdot]_{\pi} $ on the space of 1-forms.

\begin{definition}%[Infinitesimal Momentum Map]
    \label{def:InfinitesimalMomentumMap}
    Let $(M,\pi)$ be a Poisson manifold endowed with an action of a
    Poisson Lie group $(G,\pi_G)$.
    %Denote by $(\g,\delta)$ the corresponding Lie bialgebra.
\begin{enumerate}
\item A \textbf{PG-map} is a linear map
    $
    \pulce:\g\to \Omega^{1} (M)
    $
    such that
        \begin{itemize}
	\item[(i)] 	$\pulce_{[\xi,\eta]}  = [\pulce_{\xi},\pulce_\eta ]_{\pi}$
	\item[(ii)]     $\dd\pulce_{\xi} = \pulce \wedge \pulce \circ \delta(\xi)$.
    \end{itemize}
\item Moreover, if $\pulce$ generates the action, that is
      \begin{equation}\label{eq:InfinitesimalMomentumMap}
	\fvf(\xi) = \pi^{\sharp}(\pulce_{\xi}),
      \end{equation}
 we say that it is an \textbf{infinitesimal momentum map}.
\end{enumerate}
\end{definition}
The existence and uniqueness of the infinitesimal momentum map were
studied in \cite{Ginzburg1996}. In particular, it was shown that an
action of a compact group with $H^2 (\g) = 0$ admits an infinitesimal
momentum map.

We are interested to study reduction for actions that admit an infinitesimal momentum map or just a PG-map.
\begin{definition}%[Pre-Hamiltonian action]
    An action of a Poisson Lie group  on a Poisson manifold  is said to be \textbf{pre-Hamiltonian} if it is generated by an infinitesimal momentum map. % by Eq.~(\ref{eq:InfinitesimalMomentumMap}).
\end{definition}
It is important to remark that this notion is weaker than that of Poisson Hamiltonian action, as it does
not reduce to the canonical one when the Poisson structure on $G$ is trivial.
In fact, in this case we only have that $\pulce_\xi$ is a closed form, but in general this form is not exact.
However, as we will show, it turns out that every pre-Hamiltonian action is a Poisson action.
Concrete examples of  pre-Hamiltonian actions which are not Poisson Hamiltonian were provided in \cite{Ginzburg1996}.
The study of the conditions in which the infinitesimal momentum $\pulce$ map determines the momentum
map $J$ can be found in \cite{Esposito2012a}.

\begin{remark}
Recall that a \textbf{Gerstenhaber algebra} (see
\cite{Kosmann-Schwarzbach1995}) is a triple \( (A=\oplus_{i\in\Z}
A^{i}, \wedge, [\;,\;]) \) such that $(A,\wedge)$ is a graded
commutative associative algebra, \( (A=\oplus_{i\in\Z} A^{(i)},
[\;,\;]), \) with $A^{(i)}=A^{i+1}$, is a graded Lie algebra, and for
each $a\in A^{(i)}$ one has that $[a,\;]$ is a derivation of degree
$i$ with respect to $\wedge$.  A \textbf{differential Gerstenhaber
  algebra} \( (A=\oplus_{i\in\Z} A^{i}, \dd, \wedge, [\;,\;]) \) is a
Gerstenhaber algebra equipped with a differential $\dd$, that is a
derivation of degree 1 and square zero of the associative product
$\wedge$.  One speaks of a \textbf{strong differential Gerstenhaber
  algebra} if, moreover, $\dd$ is a derivation of the graded Lie
bracket $[\;,\;]$.  A \textbf{morphism of differential Gerstenhaber
  algebras} is a cochain map that respects the wedge product and the
graded Lie bracket.  It was proven in \cite{Kosmann-Schwarzbach1995}
that there is a one to one correspondence between Lie bialgebroids and
strong differential Gerstenhaber algebras.  Let $(M,\pi)$ be a Poisson
manifold and $(G,\pi_G)$ a Poisson Lie group.  Then by
\cite{Kosmann-Schwarzbach1995} one has that $(\wedge^{\bullet}\g
,\delta, \wedge, [\;,\;])$ and $(\Omega^{\bullet} (M), \dd_{DR},
\wedge, [\;,\;]_\pi )$ are strong differential Gerstenhaber algebras.
It is easy to check that the notion of infinitesimal momentum map can
be rephrased as a morphism of differential Gerstenhaber algebras
    \begin{equation}
	\label{eq:GInfinitesimalMomentumMap}
	\pulce: (\wedge^{\bullet}\g ,\delta, \wedge, [\;,\;])\longrightarrow (\Omega^{\bullet} (M), \dd_{DR}, \wedge, [\;,\;]_\pi ).
    \end{equation}
However, notice that in general, despite being a morphism of
differential Gerstenhaber algebras, an infinitesimal momentum map
$\pulce: \g \to \Omega^1(M)$ does not always correspond to a morphism
of vector bundles $\g \to T^*M$ and hence it does not necessarily
induce a morphism of Lie algebroids from $\g$ to $T^*M$.
    \end{remark}

%%%%%%%%%%%%%%%%%%%%%%%%%%%%%%%%%%%%%%%%%%%%%%%%%%%%%%%%%%%%%%%%%%%%%%%%%%%%%%%%%%%%%%%%%%%%%%%%%%%%%%%%%%%%%%%%%%%%%%%%%%%%%%%%%%%%%%%%%%%%%%
%%%%%%%%%%%%%%%%%%%%%%%%%%%%%%%%%%%%%%%%%%%%%%%%%%%%%%%%%%%%%%%%%%%%%%%%%%%%%%%%%%%%%%%%%%%%%%%%%%%%%%%%%%%%%%%%%%%%%%%%%%%%%%%%%%%%%%%%%%%%%%
%%%%%%%%%%%%%%%%%%%%%%%%%%%%%%%%%%%%%%%%%%%%%%%%%%%%%%%%%%%%%%%%%%%%%%%%%%%%%%%%%%%%%%%%%%%%%%%%%%%%%%%%%%%%%%%%%%%%%%%%%%%%%%%%%%%%%%%%%%%%%%

\subsection{Properties of PG-maps}\label{ssec:propPG}
%\subsection{Lie bialgebroid morphism}

The notion of a PG-map is crucial in order to prove a reduction
theorem in this context. For this reason in this section we study some
of its properties. In particular, we can prove that any PG-map (and
hence any infinitesimal momentum map) defines a Lie bialgebroid
morphism. Let us first recall the definitions related with Lie
algebroids that we use in this paper.
\begin{definition}%[Lie Algebroid Morphism]
    \label{def: LieAlgOidMorphism}
    Let $E \to M$ and $F \to N$ be two Lie algebroids.
    A \textbf{Lie algebroid morphism} is a bundle map $\Phi : E \to F$ such
    that
    $$
	\Phi^*:(\Gamma(\wedge^\bullet F^*), \dd^F)\to (\Gamma(\wedge^\bullet E^*),\dd^E)
    $$
    is a cochain map.
\end{definition}

\begin{definition}%[Lie Bialgebroid]
    Assume that $E \to M$ is a Lie algebroid and that its dual vector bundle $E^* \to M$
    also carries a structure of Lie algebroid. The pair $(E, E^*)$ of Lie algebroids is
    a \textbf{Lie bialgebroid} if these differentials are derivations of the corresponding
    Schouten brackets, i.e. for any $X,Y \in \Sections(E)$
    \begin{equation}
	\dd_*[X,Y] = [\dd_*X, Y] + [X, \dd_*Y].
    \end{equation}
\end{definition}
It is important to mention that given a Lie bialgebroid $(E, E^*)$,
the Lie algebroid structure on $E$ always induces a linear Poisson
structure on $E^*$ and viceversa. The most canonical example is given
by the Lie bialgebroid $(TM, T^*M)$ associated to a Poisson manifold
$M$. In particular, given a Poisson manifold $M$, its tangent bundle
carries a linear Poisson structure as shown in the following lemma.
\begin{lemma}[{\cite[Prop. 10.3.12]{Mackenzie2005}}]
    Let $(M, \pi_M)$ be a Poisson manifold. Then its tangent bundle
    $TM$ has a linear Poisson structure $\pi_{TM}$ defined by
    \begin{equation}\label{eq:TangentLift}
	\pi^\sharp_{TM} \circ \alpha_M = k_M \circ T\pi^\sharp_{M}, %(=d_T \pi^\sharp_{M})
    \end{equation}
    where $k_M : TTM \to TTM$ is the canonical involution of the double tangent bundle and $\alpha_M : TT^*M \to T^*TM$ is the Tulczyjew isomorphism  \cite{Tulczyjew1974,Tulczyjew1999}.
\end{lemma}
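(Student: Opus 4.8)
The plan is to verify three things in turn: that \eqref{eq:TangentLift} legitimately defines a bivector field $\pi_{TM}$ on $TM$; that $\pi_{TM}$ is linear with respect to the vector bundle $TM\to M$; and that $[\pi_{TM},\pi_{TM}]=0$. The first two are bookkeeping with the canonical isomorphisms $k_M$ and $\alpha_M$; the last one carries the real content.

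For well-definedness, observe that $\alpha_M\colon TT^*M\to T^*TM$ is a fibre-preserving diffeomorphism covering $\id_{TM}$ and $k_M\colon TTM\to TTM$ is a diffeomorphism, so $k_M\circ T\pi^\sharp_M\circ\alpha_M^{-1}\colon T^*TM\to TTM$ is a well-defined bundle map, which one checks covers $\id_{TM}$ for the relevant pair of projections; hence it is of the form $\pi^\sharp_{TM}$ for a unique $(2,0)$-tensor $\pi_{TM}$, provided it is skew. I would compute $\pi_{TM}$ in adapted coordinates $(x^i,\dot x^i)$ on $TM$ using the coordinate forms of the canonical flip $k_M$ and of $\alpha_M$ (which sends $(x^i,p_i,\dot x^i,\dot p_i)\in TT^*M$ to the covector $\dot p_i\,\dd x^i+p_i\,\dd \dot x^i$ over $(x^i,\dot x^i)\in TM$); writing $\pi_M=\tfrac12\pi^{ij}\partial_i\wedge\partial_j$, this yields
$$
 \pi_{TM}\;=\;\pi^{ij}(x)\,\frac{\partial}{\partial x^i}\wedge\frac{\partial}{\partial\dot x^j}\;+\;\tfrac12\,\dot x^k\,\frac{\partial\pi^{ij}}{\partial x^k}(x)\,\frac{\partial}{\partial\dot x^i}\wedge\frac{\partial}{\partial\dot x^j},
$$
which is visibly skew, so $\pi_{TM}$ is a genuine bivector, and one recognises it as the tangent (complete) lift of $\pi_M$. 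Linearity is then immediate from this expression --- $\pi^\sharp_{TM}$ carries fibrewise-linear $1$-forms on $TM$ to fibrewise-linear vector fields and basic $1$-forms to fibrewise-constant ones --- or, more conceptually, from the fact that $T\pi^\sharp_M$, $k_M$ and $\alpha_M$ are all morphisms of double vector bundles, so that $\pi^\sharp_{TM}$ is one too; thus $\pi_{TM}$ is a linear bivector, in fact the one on $TM=(T^*M)^*$ dual to the cotangent Lie algebroid of $(M,\pi_M)$.

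It remains to prove $[\pi_{TM},\pi_{TM}]=0$, which I expect to be the main obstacle. My preferred argument is that the tangent lift $P\mapsto P^{T}$ is a homomorphism of graded Lie algebras $(\vf^\bullet(M),\br)\to(\vf^\bullet(TM),\br)$ for the Schouten--Nijenhuis bracket, i.e. $[P,Q]^{T}=[P^{T},Q^{T}]$ for all multivector fields $P,Q$ on $M$; this reduces, via the graded Leibniz rule and the behaviour of the tangent lift on products, to the classical identity $[X,Y]^{T}=[X^{T},Y^{T}]$ for vector fields (implicit in the theory of tangent derivations \cite{Pidello1987}), and then applying it to $P=Q=\pi_M$ together with $[\pi_M,\pi_M]=0$ gives $[\pi_{TM},\pi_{TM}]=[\pi_M,\pi_M]^{T}=0$. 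Alternatively --- and this is essentially the route of \cite{Mackenzie2005} --- one argues on the dual side: since $(M,\pi_M)$ is Poisson, $(TM,T^*M)$ is a Lie bialgebroid, the cotangent Lie algebroid $T^*M\to M$ admits a tangent prolongation $TT^*M\to TM$ which is again a Lie algebroid, and $\alpha_M$ identifies this prolongation with the Lie algebroid structure on $T^*TM\to TM$ associated with $\pi_{TM}$; since the existence of the latter is equivalent to $\pi_{TM}$ being Poisson, the claim follows. Everything apart from this last step is formal manipulation of $k_M$ and $\alpha_M$; the vanishing of $[\pi_{TM},\pi_{TM}]$ is precisely where the genuinely non-trivial input --- the compatibility of the tangent lift with the Schouten bracket --- enters.
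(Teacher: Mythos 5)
Your argument is correct, but note that the paper does not actually prove this lemma: it is imported verbatim from \cite[Prop.~10.3.12]{Mackenzie2005}, so there is no in-paper proof to compare against. Of the two routes you sketch for the Jacobi identity, the second (dualize: the tangent prolongation $TT^*M\to TM$ of the cotangent Lie algebroid is again a Lie algebroid, $\alpha_M$ transports it to $T^*TM\to TM$, and a Lie algebroid structure on the dual of the bundle $TM\to M$ is equivalent to a linear Poisson structure on $TM$) is indeed the one taken in Mackenzie's book, while the first (the tangent lift intertwines Schouten brackets, so $[\pi_{TM},\pi_{TM}]=[\pi_M,\pi_M]^{T}=0$) is the argument of Grabowski--Urba\'nski \cite{Grabowski1995}, which this paper also cites; either is acceptable, and the first is the more economical if one grants $[P,Q]^{T}=[P^{T},Q^{T}]$. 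One small caveat on that first route: reducing $[P,Q]^{T}=[P^{T},Q^{T}]$ to the vector-field identity $[X,Y]^{T}=[X^{T},Y^{T}]$ by the graded Leibniz rule alone is not quite enough, since the tangent lift of a decomposable multivector involves vertical lifts, $(X\wedge Y)^{T}=X^{T}\wedge Y^{v}+X^{v}\wedge Y^{T}$, so you also need the companion identities $[X^{T},Y^{v}]=[X,Y]^{v}$ and $[X^{v},Y^{v}]=0$ together with their analogues on functions; these are standard but should be named. Your coordinate formula for $\pi_{TM}$, and the identification of it as the complete lift, are consistent with the coordinate form of $\alpha_M$ used later in the paper in the proof of Lemma~\ref{lem:Tit} and with the remark following the lemma that $\pi_{TM}$ coincides with the standard complete lift of $\pi_M$.
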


Note that the linear Poisson structure $\pi_{TM}$ on $TM$ coincides with the standard complete lift of $\pi_{M}$.

We can now give the needed definition of a morphism of Lie bialgebroids.
\begin{definition}%[Lie bialgebroid morphism]
    A \textbf{Lie bialgebroid morphism} is a Lie algebroid morphism
    which is a Poisson map.
\end{definition}

In order to prove that any PG-map $\pulce$ (see
Def.~\ref{def:InfinitesimalMomentumMap}), corresponds to a Lie
bialgebroid morphism, we need to introduce a dual notion to that of
PG-map.
\begin{definition}
    \label{def:ComomentumMap}
    Given a map  $\pulce : \g \to \Omega^1(M)$,
    we can associate the map
    $c: TM \to \g^*$ defined by
    \begin{equation}
	\label{eq:ComomentumMap}
	\langle c(X_m), \xi \rangle = \langle X_m , \pulce_\xi(m)\rangle,
    \end{equation}
    for any $X_m\in T_m M$. If $\pulce$ is an infinitesimal momentum map we call $c$ a \textbf{comomentum map}.
\end{definition}

We are now ready to prove the announced result.

\begin{proposition}
  \label{prop:bialg}
  Let $\pulce : \g \to \Omega^1(M)$ be a PG-map.
  The associated map $c: TM \to \g^*$ is a Lie bialgebroid morphism.
\end{proposition}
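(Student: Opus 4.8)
The plan is to unwind the definition of a Lie bialgebroid morphism, which by the definition just given amounts to two things: first, that $c : TM \to \g^*$ is a Lie algebroid morphism, and second, that it is a Poisson map with respect to the linear Poisson structure $\pi_{TM}$ on $TM$ and the Lie-Poisson structure on $\g^*$ coming from the Lie bracket on $\g$. Here the relevant Lie algebroid structure on the base $TM$ is the tangent prolongation algebroid $TM \to M$, while $\g \to \{*\}$ is the Lie algebroid whose dual carries the Lie-Poisson structure; and on the dual side $T^*M \to M$ carries its usual cotangent Lie algebroid structure induced by $\pi$, dual to $\g^*$. So the PG-map $\pulce : \g \to \Omega^1(M)$ should be viewed as (the section-level avatar of) the dual comorphism, and $c$ is its ``transpose''.

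First I would verify that $c$ is a Lie algebroid morphism. By Definition~\ref{def: LieAlgOidMorphism} this means the pullback $c^* : (\Gamma(\wedge^\bullet \g), \delta) \to (\Gamma(\wedge^\bullet T^*M), \dd^{TM})$ is a cochain map, where $\dd^{TM}$ is the de Rham differential of the tangent algebroid, i.e. (under the standard identification of $\Gamma(\wedge^\bullet T^*M)$ with differential forms) the usual de Rham differential $\dd_{DR}$. Now condition (ii) in the definition of a PG-map, $\dd\pulce_\xi = \pulce \wedge \pulce \circ \delta(\xi)$, is precisely the statement that $\pulce$ intertwines $\delta$ on $\wedge^\bullet \g$ with $\dd_{DR}$ on $\Omega^\bullet(M)$ in degree one, and since both $\delta$ and $\dd_{DR}$ are determined as derivations by their action on generators, this extends to all degrees. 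Tracing through the identification $c^* = \pulce$ (on generators), this says exactly that $c^*$ is a cochain map, hence $c$ is a Lie algebroid morphism. Condition (i), $\pulce_{[\xi,\eta]} = [\pulce_\xi, \pulce_\eta]_\pi$, is the compatibility of $\pulce$ with the Lie brackets, and this is what is needed to see that $c$ respects the \emph{anchor-and-bracket} data on the dual side; concretely, it is the infinitesimal bialgebroid compatibility expressing that $c$ is a morphism not just of the algebroids but of the bialgebroid pair. I would make this precise by invoking the Gerstenhaber-algebra reformulation from the Remark: a PG-map is exactly a morphism of (strong) differential Gerstenhaber algebras $(\wedge^\bullet \g, \delta, \wedge, [\,,\,]) \to (\Omega^\bullet(M), \dd_{DR}, \wedge, [\,,\,]_\pi)$, and by the Kosmann-Schwarzbach correspondence between strong differential Gerstenhaber algebras and Lie bialgebroids, such a morphism dualizes to a Lie bialgebroid morphism $c : TM \to \g^*$.

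Second I would check directly that $c$ is a Poisson map, $c_* \pi_{TM} = \pi_{\g^*}$, as a cross-check and because the bialgebroid-morphism notion in this paper is stated in exactly those terms. The linear Poisson structure on $\g^*$ is determined by $\{\ell_\xi, \ell_\eta\} = \ell_{[\xi,\eta]}$ on linear functions $\ell_\xi$, and $\ell_\xi \circ c$ is, by \eqref{eq:ComomentumMap}, the fiberwise-linear function on $TM$ given by $X_m \mapsto \langle X_m, \pulce_\xi(m)\rangle$ — i.e. the ``vertical lift'' pairing a tangent vector against the $1$-form $\pulce_\xi$. Using the description of $\pi_{TM}$ as the complete/tangent lift of $\pi$ (equivalently via the Tulczyjew isomorphism, as in \eqref{eq:TangentLift}), the Poisson bracket on $TM$ of two such linear functions associated to $1$-forms $\alpha,\beta$ is the linear function associated to the $1$-form $[\alpha,\beta]_\pi$. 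Hence $\{\ell_\xi \circ c, \ell_\eta \circ c\}_{TM}$ is the linear function associated to $[\pulce_\xi, \pulce_\eta]_\pi$, which by PG-condition (i) equals $\pulce_{[\xi,\eta]}$, and this is exactly $\ell_{[\xi,\eta]} \circ c$. Since the fiberwise-linear functions and the pullbacks of functions from $M$ together generate $\Cinfty(TM)$ (and the pullbacks of $\Cinfty(M)$ behave correctly because $c$ is fiberwise linear and the base map is the identity), this suffices.

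The main obstacle I anticipate is purely bookkeeping: matching up the three different pictures — (a) $\pulce$ as a map of section spaces $\g \to \Omega^1(M)$, (b) $c$ as a genuine bundle map $TM \to \g^*$, and (c) the Lie algebroid de Rham differentials $\dd^{TM}$, $\delta$ on exterior algebras of duals — and in particular being careful that $\pulce$ is \emph{not} assumed to come from a vector bundle map $\g \to T^*M$ (as the Remark stresses), so one cannot naively dualize $\pulce$ fiberwise; rather, one must check that $c$ is well-defined and smooth directly from \eqref{eq:ComomentumMap} (it is, since it is linear on each fiber with coefficients depending smoothly on $m$) and then verify the morphism conditions at the level of the induced maps on functions/forms, where conditions (i) and (ii) of Definition~\ref{def:InfinitesimalMomentumMap} enter. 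Once the dictionary is set up, the verification is a direct translation of the two PG-axioms into the two halves (algebroid morphism / Poisson map) of the bialgebroid-morphism definition.
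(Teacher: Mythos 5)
Your proposal follows essentially the same route as the paper: condition (ii) of the PG-map definition makes $c^*$ (the multiplicative extension of $\pulce$) a cochain map, hence $c$ a Lie algebroid morphism, and condition (i) gives the Poisson-map property by checking the bracket on linear functions $l_\xi\circ c$ and extending (the paper extends via polynomials and density rather than your generating-set remark, but this is immaterial). The additional appeal to the Kosmann-Schwarzbach correspondence is not needed and is not used in the paper's argument.
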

\begin{proof}
   From the definition it follows immediately that $c$ is a morphism of vector bundles.
    Indeed, being a vector bundle over a point, $\g^*$  has just one fiber, hence $\pulce$
    sends fibers into fibers. Moreover, $c$ is fiberwise linear, due to the linearity of
    $\pulce$.
    Finally, the pull-back  $c^* : \Gamma(\wedge^\bullet \g) \to \Gamma(\wedge^\bullet T^*M)$
    is given by the the natural extension of the map $\pulce$ and hence it is a cochain map.
    Thus, $c$ is a morphism of Lie algebroids.
    It remains to prove that the map $c^*: \Cinfty(\g^*) \to \Cinfty(TM)$ is a Poisson map,
    i.e. $\{f,g\}_{\g^*}\circ c = \{ f\circ c , g\circ c\}_{TM}$.
    First, we consider $f$ and $g$ to be linear maps from $\g^*$ to $\mathbb{R}$,
    so can denote them as
    \begin{equation*}
	f = l_\xi, \quad g = l_\eta
    \end{equation*}
    for $\xi,\eta \in \g$. For any $\xi \in \g$, we now define
    \begin{equation*}
	l_{\xi^\dagger} := l_\xi\circ c.
    \end{equation*}
    So, we aim to prove that
    \begin{equation*}
	\{ l_{\xi^\dagger}, l_{\eta^\dagger}\} = l_{[\xi,\eta]^\dagger }.
    \end{equation*}
    Using the definition of $c$ it is evident that
    \begin{equation*}
	l_{\xi^\dagger}(v_m) = \pulce_\xi(v_m),
    \end{equation*}
    for any $v_m \in T_m M$. Thus
    \begin{equation*}
	l_{\xi^\dagger} = \pulce_\xi.
    \end{equation*}
    Hence,
    \begin{equation*}
	\{ l_{\xi^\dagger}, l_{\eta^\dagger}\} = \{\pulce_\xi, \pulce_\eta \} = \pulce_{[\xi,\eta]}=l_{[\xi,\eta]^\dagger }.
    \end{equation*}
    The extension to smooth functions can be easily done.
    In facts, we can immediately extend the result to polynomials
    and it is known that the space of polynomials is dense in the space of smooth functions.
    %Equivariancy: apparently, it is known %claimed in paper http://arxiv.org/pdf/math/0309180.pdf pag 4
\end{proof}

Given an action of a Poisson Lie group $(G, \pi_G)$ on a Poisson manifold $(M, \pi)$,
its infinitesimal generator is a map $\g \to \Gamma(TM): \xi \mapsto \fvf(\xi)$.
Dualizing this map one gets a map $j: T^*M \to \g^*$.
In  \cite[Prop. 6.1]{Xu1995}, the author proves that the action is Poisson if and
only if $j$ is a Lie bialgebroid morphism.
As a consequence we obtain the following result.
\begin{proposition}
  \label{prop:infpoisson}
  Let $\pulce : \g \to \Omega^1(M)$ be an infinitesimal momentum map.
  Then the action induced by $\pulce$ is a Poisson action.
 %   \rr{(alternative: Any pre-Hamiltonian action is a Poisson action.)}
\end{proposition}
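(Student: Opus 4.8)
The plan is to deduce this from Proposition \ref{prop:bialg} together with the cited characterization of Poisson actions in \cite[Prop.~6.1]{Xu1995}. First I would recall the two dual bundle maps attached to the infinitesimal generator $\fvf\colon\g\to\Sections(TM)$: on the one hand the map $c\colon TM\to\gs$ of Definition \ref{def:ComomentumMap}, built from the infinitesimal momentum map $\pulce$ via $\langle c(X_m),\xi\rangle=\langle X_m,\pulce_\xi(m)\rangle$; on the other hand the map $j\colon T^*M\to\gs$ obtained by dualizing $\fvf$ directly, namely $\langle j(\alpha_m),\xi\rangle=\langle\fvf(\xi)(m),\alpha_m\rangle$. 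Since $\pulce$ is an infinitesimal momentum map, it generates the action: $\fvf(\xi)=\pi^\sharp(\pulce_\xi)$. The key observation is that this relation says precisely that $j$ factors through $c$ via the bundle map $\pi^\sharp\colon T^*M\to TM$; more useful for us, it identifies the two maps up to the Poisson anchor, so that the Lie bialgebroid morphism property of $j$ can be read off from that of $c$.

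Concretely, I would argue as follows. By Proposition \ref{prop:bialg}, the map $c\colon TM\to\gs$ is a Lie bialgebroid morphism for the Lie bialgebroid $(TM,T^*M)$ over $M$ and the (trivial-base) Lie bialgebroid $(\g,\gs)$ over a point — here I use that a PG-map, in particular an infinitesimal momentum map, satisfies the hypotheses of that proposition. A Lie bialgebroid morphism $c\colon TM\to\gs$ has, as part of its structure, a dual Lie algebroid morphism on the dual bundles; unwinding the dualities, the composite $c^\vee$ followed by the Poisson anchor $\pi^\sharp$ of $M$ is exactly the map $j\colon T^*M\to\gs$, because $\langle j(\alpha),\xi\rangle=\langle\pi^\sharp(\pulce_\xi),\alpha\rangle=\langle\alpha,\pi^\sharp(\pulce_\xi)\rangle$ and $\pi^\sharp(\pulce_\xi)=\fvf(\xi)$. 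The Lie algebroid morphism property for $j$ then follows because the differential $\dd_\pi$ on $\Sections(\wedge^\bullet T^*M)$ together with $\pi^\sharp$ intertwines the de Rham–type and Poisson brackets, and $c^*\colon\Sections(\wedge^\bullet\g)\to\Omega^\bullet(M)$ is already a cochain map by the PG-map axioms (i) and (ii); composing with the Lie algebroid morphism structure coming from $(TM,T^*M)$ being a Lie bialgebroid yields that $j^*\colon\Sections(\wedge^\bullet\g)\to\Sections(\wedge^\bullet TM)$ is a cochain map, i.e.\ $j$ is a Lie algebroid morphism $T^*M\to\gs$. Likewise the Poisson-map condition for $j$ follows from the corresponding condition for $c$ established in the proof of Proposition \ref{prop:bialg}, since on fiberwise-linear functions $l_\xi\circ j=J_\xi$-type expressions reduce to brackets of the $\pulce_\xi$'s, which behave as required by axiom (i). Hence $j$ is a Lie bialgebroid morphism, and by \cite[Prop.~6.1]{Xu1995} the action $\Phi$ is a Poisson action.

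The main obstacle I anticipate is the bookkeeping of dualizations: one must be careful that "dualizing $\fvf$" to get $j\colon T^*M\to\gs$ and "the dual Lie algebroid morphism of $c$" really produce compatible objects, so that the bialgebroid-morphism property genuinely transfers from $c$ to $j$ rather than to some transpose or inverse of it. A clean way to sidestep delicate sign and transpose issues is to verify the two defining conditions of a Lie bialgebroid morphism for $j$ directly at the level of generators: check $j^*$ is a cochain map on $\Sections(\wedge^1\g)=\g$ and on functions (degree $0$), which are the only generators of $\Sections(\wedge^\bullet\gs)$, using only $\fvf(\xi)=\pi^\sharp(\pulce_\xi)$, the PG-map axioms, and the standard identities relating $\dd_{DR}$, $[\,\cdot\,,\cdot\,]_\pi$ and $\pi^\sharp$; and separately check the Poisson-map condition on fiberwise-linear functions on $\gs$, reducing it to $\{\pulce_\xi,\pulce_\eta\}_\pi=\pulce_{[\xi,\eta]}$ exactly as in the proof of Proposition \ref{prop:bialg}. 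Either route gives the result; I would present the short one via Proposition \ref{prop:bialg} and \cite{Xu1995} and relegate the identification $j=\pi^\sharp\circ c^\vee$ to a one-line computation.
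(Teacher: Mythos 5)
Your proposal follows essentially the same route as the paper: the defining relation $\fvf(\xi)=\pi^\sharp(\pulce_\xi)$ gives the factorization $j=c\circ\pi^\sharp$ of the dualized infinitesimal action through the comomentum map, so $j$ is a composition of Lie bialgebroid morphisms (Proposition \ref{prop:bialg} for $c$, the standard fact for $\pi^\sharp$), and \cite[Prop.~6.1]{Xu1995} then yields that the action is Poisson. The only correction needed is your displayed identification $j=\pi^\sharp\circ c^\vee$, which does not typecheck; the correct statement is the one you describe verbally, namely that $j$ factors through $c$ via $\pi^\sharp\colon T^*M\to TM$, i.e.\ $j=c\circ\pi^\sharp$ (up to sign).
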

\begin{proof}
  By dualizing \eqref{eq:InfinitesimalMomentumMap} we obtain that in our case
  the dual of the infinitesimal action is
  \begin{equation*}
    \label{eq:jcpi}
        j
        =
        c  \circ \pi^\sharp,
  \end{equation*}
  where $c$ is the associated comomentum map defined by \eqref{eq:ComomentumMap}.
  From Prop.~\ref{prop:bialg} we have that $c$ is a Lie bialgebroid morphism.
  Moreover, it is well-known that $\pi^\sharp$ is a Lie bialgebroid morphism,
  so the composition $j$ is a Lie bialgebroid morphism as well.
  The claim then follows by  \cite[Prop.~6.1]{Xu1995}.
\end{proof}

It is important to remark that the above proposition
implies that a pre-Hamiltonian action is always a
Poisson action.

%%%%%%%%%%%%%%%%%%%%%%%%%%%%%%%%%%%%%%%%%%%%%%%%%%%%%%%%%%%%%%%%%%%%%%%%%%%%%%%%%%%%%%%%%%%%%%%%%%%%%%%%%%%%%%%%%%%%%%%%%%%%%%%%%%%%%%%%%%%%%%
%%%%%%%%%%%%%%%%%%%%%%%%%%%%%%%%%%%%%%%%%%%%%%%%%%%%%%%%%%%%%%%%%%%%%%%%%%%%%%%%%%%%%%%%%%%%%%%%%%%%%%%%%%%%%%%%%%%%%%%%%%%%%%%%%%%%%%%%%%%%%%
%%%%%%%%%%%%%%%%%%%%%%%%%%%%%%%%%%%%%%%%%%%%%%%%%%%%%%%%%%%%%%%%%%%%%%%%%%%%%%%%%%%%%%%%%%%%%%%%%%%%%%%%%%%%%%%%%%%%%%%%%%%%%%%%%%%%%%%%%%%%%%

\section{Reduction of the tangent bundle}

In this section, using the techniques of coisotropic reduction
recalled in Sec.\ref{sec:Hamiltonian} and the properties of
PG-maps, we prove a reduction theorem for the tangent bundle of a
Poisson manifold $(M,\pi)$ under the action of a Poisson Lie group.
It is known that the tangent bundle of a Poisson manifold inherits a
linear Poisson structure.  We will show that a PG-map automatically
produces a coisotropic submanifold of the tangent bundle. Thus, we
obtain a reduced Poisson manifold.  Furthermore, in the special case
in which there is a pre-Hamiltonian action of a Poisson Lie group $(G
, \pi_G)$ on $(M,\pi)$ we study the properties of the tangent lift of
the action and this allows us to prove that the Poisson reduced space
coincides with the $G$-orbit space as in the canonical
setting. Finally, in the classic case of a Hamiltonian action on a
Poisson manifold, we analyze the relation of the reduced tangent
bundle $(TM)_{red}$ and the reduced manifold $M_{red}$ produced by
Theorem~\ref{thm:MR}.

%%%%%%%%%%%%%%%%%%%%%%%%%%%%%%%%%%%%%%%%%%%%%%%%%%%%%%%%%%%%%%%%%%%%%%%%%%%%%%%%%%%%%%%%%%%%%%%%%%%%%%%%%%%%%%%%%%%%%%%%%%%%%%%%%%%%%%%%%%%%%%
%%%%%%%%%%%%%%%%%%%%%%%%%%%%%%%%%%%%%%%%%%%%%%%%%%%%%%%%%%%%%%%%%%%%%%%%%%%%%%%%%%%%%%%%%%%%%%%%%%%%%%%%%%%%%%%%%%%%%%%%%%%%%%%%%%%%%%%%%%%%%%
%%%%%%%%%%%%%%%%%%%%%%%%%%%%%%%%%%%%%%%%%%%%%%%%%%%%%%%%%%%%%%%%%%%%%%%%%%%%%%%%%%%%%%%%%%%%%%%%%%%%%%%%%%%%%%%%%%%%%%%%%%%%%%%%%%%%%%%%%%%%%%

\subsection{Coisotropic and pre-Hamiltonian reduction}%%%

Consider a Lie bialgebra $(\g, \delta)$, a Poisson manifold $(M,\pi)$ and let  $\pulce: \g \to \Omega^1(M)$ be a PG-map.
These ingredients are sufficient to obtain a coisotropic reduction.
In Sec.~\ref{ssec:propPG}, to a PG-map $\pulce$ we associated a dual map $c: TM \to \g^*$ by
\eqref{eq:ComomentumMap} and we proved that it is a Poisson map.

The results on coisotropic reduction recalled in Sec.~\ref{sec:Hamiltonian}  can be immediately applied to this case.
More explicitly, we can prove the following result.
\begin{theorem} \label{thm:TangentReduction}
Let $(M,\pi)$ be a Poisson manifold endowed with a PG-map $\pulce:
\g \to \Omega^1(M)$. Then $C := c^{-1}(0)\subseteq TM$ is a
coisotropic submanifold, where $0\in \g^*$ is a regular
value~of~$c$. Moreover, if $C$ defines a simple foliation on $TM$,
then the reduced manifold $(T M)_{red} \,=\, \leaf $ has a Poisson
structure.
\end{theorem}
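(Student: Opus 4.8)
The plan is to verify the hypotheses of the coisotropic reduction machinery (Theorem~\ref{thm:CoisotropicReduction} and Proposition~\ref{prop:AlgebraicReduction}) for the submanifold $C = c^{-1}(0) \subseteq TM$, where $TM$ is equipped with the linear Poisson structure $\pi_{TM}$. The only real content is the coisotropy of $C$; once that is established, the Poisson structure on the leaf space is immediate from the recalled theory. First I would record that since $0$ is a regular value of $c$, the set $C = c^{-1}(0)$ is a closed regular submanifold of $TM$, so the identification $\Cinfty(TM)/\mathscr{I}_C \cong \Cinfty(C)$ applies and the leaf-space discussion of Section~\ref{sec:Hamiltonian} is available.

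The key step is to show $C$ is coisotropic, i.e.\ that the vanishing ideal $\mathscr{I}_C$ is a Poisson subalgebra of $(\Cinfty(TM), \{\,,\,\}_{TM})$. Here I would exploit that $c: TM \to \g^*$ was shown in Proposition~\ref{prop:bialg} to be a Poisson map, where $\g^*$ carries the Lie–Poisson structure dual to the Lie bracket of $\g$. The vanishing ideal $\mathscr{I}_C$ is generated, as an ideal, by the pulled-back linear functions $l_\xi \circ c = \pulce_\xi$ for $\xi \in \g$ (these are the functions cutting out the fibre over $0 \in \g^*$), together with — if one prefers to argue locally — functions pulled back from a coordinate neighbourhood of $0$. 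Because $c$ is Poisson, for $\xi,\eta\in\g$ one has
$$
\{\pulce_\xi, \pulce_\eta\}_{TM} = \{l_\xi \circ c, l_\eta\circ c\}_{TM} = \{l_\xi, l_\eta\}_{\g^*}\circ c = l_{[\xi,\eta]}\circ c = \pulce_{[\xi,\eta]},
$$
which again vanishes on $C$. More conceptually: $0 \in \g^*$ is a single Poisson-cohomology-trivial point — in fact $\{0\}$ is a (zero-dimensional) symplectic leaf of the Lie–Poisson structure on $\g^*$, hence a coisotropic submanifold of $\g^*$ — and the preimage of a coisotropic submanifold under a Poisson submersion-like map with $0$ a regular value is coisotropic. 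I would phrase the argument so that it is clear $\{\mathscr{I}_C, \mathscr{I}_C\}_{TM} \subseteq \mathscr{I}_C$, using the Leibniz rule to reduce to generators.

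Having established coisotropy, the characteristic distribution on $C$ spanned by the Hamiltonian vector fields $X_f$ with $f \in \mathscr{I}_C$ is integrable, as recalled in Section~\ref{sec:Hamiltonian}. Under the standing assumption that the resulting foliation is simple, the leaf space $(TM)_{red} = \leaf$ is a smooth manifold with surjective submersion $p: C \to (TM)_{red}$, and Theorem~\ref{thm:CoisotropicReduction} endows it with a Poisson structure for which $\Cinfty((TM)_{red}) \cong \mathscr{B}_C/\mathscr{I}_C$ as Poisson algebras. This completes the proof. The main obstacle is the coisotropy verification — specifically, being careful that $\mathscr{I}_C$ really is generated by the $\pulce_\xi$ (or by functions pulled back from $\g^*$ near $0$) so that the Leibniz-rule reduction to generators is legitimate; the regular-value hypothesis on $0$ is exactly what guarantees this, and it is also what is needed for $C$ to be a submanifold in the first place. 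Everything after that is a direct citation of the recalled coisotropic-reduction results.
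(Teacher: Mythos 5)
Your proposal is correct and follows essentially the same route as the paper: coisotropy of $C=c^{-1}(0)$ is deduced from the facts that $c$ is a Poisson map (Proposition~\ref{prop:bialg}) and that $\{0\}$ is a symplectic leaf of the Lie--Poisson structure on $\g^*$, after which Theorem~\ref{thm:CoisotropicReduction} gives the Poisson structure on the leaf space. Your explicit verification on the generators $c_\xi=l_\xi\circ c$ together with the Leibniz rule (justified by the regular-value hypothesis) simply spells out what the paper leaves as "follows immediately."
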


\begin{proof}
  The fact that $C$ is a coisotropic submanifold follows immediately
  by the fact that $\{0\}$ is a symplectic leaf in $\g^*$ and from the
  fact that $c$ is a Poisson map, as proved in
  Proposition~\ref{prop:bialg}.
 %paper http://arxiv.org/pdf/math/0309180.pdf

To complete the proof it is enough to apply the coisotropic reduction
Theorem \ref{thm:CoisotropicReduction} to our $C$.
\end{proof}
Now, we want to prove that the reduction in the case of a
Pre-Hamiltonian action of a Poisson-Lie group gives rise to a special
case of the coisotropic reduction obtained above. In the following we
always assume the action to be free and proper.

Assume that we have an infinitesimal momentum map $\pulce: \g \to
\Omega^1(M)$. The associated action is in general not Hamiltonian
unless $\pulce_\xi$ is exact.  However, we will see that if
$\pulce_\xi$ is closed the lifted action on the tangent bundle is
always Hamiltonian.  In order to prove these results, we need some
tools from the theory of derivations along maps \cite{Pidello1987}.

%%%%%%%%%%%%%%%%%%
%%%%%%%%%%%%%%%%%%

A \textbf{tangent derivation} (that is, a derivation along $\tau_M^*$,
see \cite{Pidello1987}) of degree $p$ is a linear operator $\DD :
\Omega^k(M) \to \Omega^{k+p}(TM)$ such that
\begin{equation}
    \label{eq:Tangent}
    \DD (\omega_1 \wedge \omega_2) =     \DD \omega_1 \wedge \tau_M^* \omega_2+ (-1)^{kp}  \tau_M^* \omega_1 \wedge \DD\omega_2.
\end{equation}
We define $i_{T} : \Omega^k(M) \to \Omega^{k-1}(TM)$ as the tangent
derivation of degree $-1$ such that it is zero on functions and acts
on any 1-form $\theta : M \to T^*M$ by
\begin{equation}
    \label{eq:iT}
    i_{T} \theta (v) = \langle \theta(\tau_{M}(v)),v \rangle,
\end{equation}
for any $v\in TM$.

\begin{remark}
    Notice that given $\pulce:\g\to \Omega^1(M)$ and $c: TM \to \g^*$ we can express the map
\begin{align*}
&\g \to \Cinfty(TM)\\
& \xi \mapsto c_\xi :=c\circ \xi
\end{align*}
 in terms of the tangent derivation $i_T$ defined above.
    We get
    \begin{equation}\label{eq:ComomentumTangentDer}
      c_\xi = i_T \pulce_\xi
    \end{equation}
\end{remark}

Then, one easily obtains that on any $k$-form $\omega$ on $M$,
$$
    i_{T} \omega (w_1,\ldots, w_{k-1}) = \langle \omega(\tau_{TM}(w)),  T\tau_{M}(w_1),\ldots, T\tau_{M}(w_{k-1})\rangle
$$
for any $w_1,\ldots, w_{k-1}\in TTM$ such that $\tau_{TM}(w_1)=\ldots =\tau_{TM}(w_{k-1})$.
If $\omega_1$ is a $k$-form and $\omega_2$ is an $l$-form, from \eqref{eq:Tangent} one has
\begin{equation}
    \label{eq:iTLeibniz}
    i_{T} (\omega_1 \wedge \omega_2) =     i_{T} \omega_1 \wedge \tau_M^* \omega_2+ (-1)^k  \tau_M^* \omega_1 \wedge i_T\omega_2
\end{equation}
One can also define
\begin{equation}
    \label{eq:CartanDT}
    \dd_{T}{\omega} = i_{T} \dd{\omega} + \dd i_{T}{\omega}
\end{equation}
It is easy to check that $\dd_{T} : \Omega^k(M) \to \Omega^{k}(TM)$ is
a tangent derivation of degree $0$ and
\begin{equation}
    \label{eq:dTLeibniz}
    \dd_{T} (\omega_1 \wedge \omega_2) =     \dd_{T} \omega_1 \wedge \tau_M^* \omega_2+ (-1)^k  \tau_M^* \omega_1 \wedge \dd_T\omega_2.
\end{equation}
Note that if $\omega$ is a $k$-form on a manifold $M$, then the $k$-form $\dd_{T}{\omega}$ on $TM$ is just the standard complete lift of $\omega$.
The theory of complete lifts of tensor field to the tangent bundle was developed in \cite{Yano1973}.

The following result is known but a proof is not easily available, so we provide one below.
\begin{lemma}\label{lem:Tit}
    For any 1-form $\theta:M\to T^*M$ on a manifold $M$, one has
    $T\theta : TM \to TT^*M$ and
    \begin{equation}
	\label{eq: AlphaT}
	\alpha_M \circ T \theta = \dd_{T}{\theta}.
    \end{equation}
\end{lemma}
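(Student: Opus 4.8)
The plan is to verify the identity $\alpha_M \circ T\theta = \dd_T\theta$ by checking it on a spanning set of vector fields on $TM$ and, in parallel, by reducing to a coordinate computation that makes both sides transparent. First I would recall the precise definitions: $\alpha_M : TT^*M \to T^*TM$ is the Tulczyjew isomorphism, which in natural bundle coordinates $(q^i, p_i)$ on $T^*M$ and the induced coordinates $(q^i, \dot q^i)$ on $TM$ is characterized by swapping the roles of the ``$p$'' and ``$\dot q$'' slots; concretely, writing a point of $TT^*M$ in coordinates $(q^i, p_i, \dot q^i, \dot p_i)$, one has $\alpha_M(q, p, \dot q, \dot p) = (q, \dot q, \dot p, p)$ as a point of $T^*TM$ with coordinates $(q^i, \dot q^i, \text{(cov. in } q), \text{(cov. in } \dot q))$. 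Then I would compute $T\theta$ for $\theta = \theta_i(q)\,\dd q^i$: a tangent vector $v = \dot q^j \partial_{q^j} \in T_qM$ is sent by $T\theta$ to the vector in $TT^*M$ with coordinates $\big(q^i,\ \theta_i(q),\ \dot q^i,\ \partial_{q^j}\theta_i(q)\,\dot q^j\big)$. Applying $\alpha_M$, I get the covector in $T^*TM$ over the point $(q^i,\dot q^i)$ with components $\big(\partial_{q^j}\theta_i\,\dot q^j,\ \theta_i\big)$ in the $(\dd q^i, \dd \dot q^i)$ basis, i.e. the $1$-form $\partial_{q^j}\theta_i\,\dot q^j\,\dd q^i + \theta_i\,\dd\dot q^i$.

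Next I would compute the right-hand side $\dd_T\theta$ directly from \eqref{eq:CartanDT}, \eqref{eq:iT}, and the definition of $i_T$ on $2$-forms. In coordinates, $i_T\theta = \theta_i(q)\,\dot q^i$ as a function on $TM$ (by \eqref{eq:iT}), so $\dd(i_T\theta) = \partial_{q^j}\theta_i\,\dot q^i\,\dd q^j + \theta_i\,\dd\dot q^i$. For the other term, $\dd\theta = \partial_{q^j}\theta_i\,\dd q^j\wedge\dd q^i$, and applying $i_T$ to this $2$-form using the stated formula $i_T\omega(w) = \langle\omega(\tau_{TM}(w)), \cdot, T\tau_M(w)\rangle$ (the degree $-1$ tangent derivation that inserts the tautological vector into the last slot) gives $i_T(\dd\theta) = \partial_{q^j}\theta_i\,\dot q^j\,\dd q^i - \partial_{q^j}\theta_i\,\dot q^i\,\dd q^j = \big(\partial_{q^j}\theta_i - \partial_{q^i}\theta_j\big)\dot q^j\,\dd q^i$. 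Adding, the antisymmetric pieces combine so that $\dd_T\theta = \partial_{q^j}\theta_i\,\dot q^j\,\dd q^i + \theta_i\,\dd\dot q^i$, which matches the expression for $\alpha_M\circ T\theta$ computed above. I would also note that the identity is local and both sides are $\R$-linear and natural, so checking it in one natural chart suffices.

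The main obstacle I anticipate is bookkeeping the sign and slot conventions: the Tulczyjew isomorphism $\alpha_M$ has several conventions in the literature differing by signs, and the tangent derivation $i_T$ on $2$-forms carries a sign (the $(-1)^k$ in \eqref{eq:iTLeibniz}), so the delicate point is to fix conventions consistently with \eqref{eq:TangentLift} and \eqref{eq:iT} as given in the excerpt and then track signs through the contraction of $\dd\theta$ with the tautological vector field. A cleaner, coordinate-free alternative I would also consider is to test \eqref{eq: AlphaT} against vertical and complete lifts of vector fields on $M$: for $Y \in \vf(M)$, one knows $\langle \alpha_M\circ T\theta, Y^c\rangle$ and $\langle\alpha_M\circ T\theta, Y^v\rangle$ in terms of pairings on $M$, while $\dd_T\theta$ evaluated on $Y^c$ and $Y^v$ is computed from the fact (noted in the excerpt) that $\dd_T$ is the complete lift, together with $i_T\theta(Y^c) = \theta(Y)\circ\tau_M$-type identities; since complete and vertical lifts span $\vf(TM)$ over $\Cinfty(TM)$, this pins down the equality. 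Either route finishes the lemma; I would present the coordinate computation as the primary argument for concreteness and remark on the lift-based check as confirmation.
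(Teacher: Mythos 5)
Your proposal is correct and follows essentially the same route as the paper: a direct verification in natural bundle coordinates, computing $\alpha_M\circ T\theta$ from the coordinate expression $\alpha_M(q^i,p_j,\dot q^h,\dot p_k)=(q^i,\dot q^h,\dot p_k,p_j)$ and matching it against $\dd_T\theta=i_T\dd\theta+\dd i_T\theta$, with identical intermediate expressions. The additional lift-based check you sketch is a reasonable confirmation but is not used in the paper.
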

\begin{proof}
In this proof we will make use of the Einstein summation convention.
Let us take suitable local coordinate charts $(q^{i})$ in $M$ and
$(q^{i},v^{j})$ in $TM$ (with $i,j=1, \dots, n$). Then, the 1-form
$\theta$, seen as a map $\theta:M\to TM$ has the following coordinate
expression
\begin{equation}\label{eq:teta}
\theta(q)=(q^{i}, \theta_{j}(q)).
\end{equation}
Hence for any $v\in TM$ of coordinates $(q^{i},v^{j})$ one has
\begin{equation*}
i_{T}\theta(v)=\theta_{i}(q)v^{i}.
\end{equation*}
Thus
\begin{equation}\label{eq:di_T}
\dd i_{T}\theta(v)=(q^{i}, v^{j},\partial_{q_{i}} \theta_{j}(q)v^{j},\theta_{j}(q)).
\end{equation}

On the other hand
\begin{equation*}
\dd \theta=\frac12 (\partial_{q_{i}} \theta_{j}-\partial_{q_{j}} \theta_{i})dq^{i}\wedge dq^{j}.
\end{equation*}
Hence
\begin{equation}
i_{T} \dd\theta(v)= (\partial_{q_{j}} \theta_{i}-\partial_{q_{i}} \theta_{j})v^{j} dq^{i}.
\end{equation}
Thus $\dd_{T}\theta=i_{T} \dd\theta+ \dd i_{T}\theta$ has the following local coordinate expression
\begin{equation}\label{eq:d_Ttheta}
\dd_{T}\theta(v)=(q^{i}, v^{j},\partial_{q_{j}} \theta_{i}(q)v^{j},\theta_{j}(q)).
\end{equation}

On the other hand, from \eqref{eq:teta} one has
\begin{equation*}
T _{q}\theta(v)=v^{i} \dd q^{i} + \partial_{q_{i}}\theta_{j}v^{i} \dd v^{j}.
\end{equation*}
Hence as a map
\begin{equation*}
T \theta(v)%=( \theta(v), T _{q}\theta(v))
=(q^{i}, \theta_{j}(q),v^{i},\partial_{q_{i}} \theta_{j}(q)v^{i}).
\end{equation*}
Now recall that (see e.g. \cite{Tulczyjew1999})
\begin{equation*}
\alpha_{M}(q^{i},p_{j},\dot{q}^{h},\dot{p_{k}})=(q^{i},\dot{q}^{h},\dot{p_{k}},p_{j}).
\end{equation*}
Hence
\begin{equation}\label{eq:alfaT}
\alpha_{M}\circ T \theta(v)
=(q^{i} ,v^{j},\partial_{q_{j}} \theta_{i}(q)v^{j},\theta_{j}(q)).
\end{equation}
By comparing \eqref{eq:d_Ttheta}  and \eqref{eq:alfaT}, the claim follows.
\end{proof}
\begin{remark}
Note that a similar result to Lemma~\ref{lem:Tit} is
\cite[Theorem~2.1]{Grabowski1995} that relates $i_T(T\theta)$ and
$i_T(\dd_{T}\theta)$ via $k_M$. Now, $k_M$ is in a sense dual to
$\alpha_M$. So one could try to derive the lemma from the result in
\cite{Grabowski1995}.  However, the duality between $k_M$ and
$\alpha_M$ is highly nontrivial, as it involves two different pairings
(see e.g. \cite{Tulczyjew1999}). Hence we think that it is easier to
prove our claim directly, as we did above.
\end{remark}
Given a pre-Hamiltonian action, the above results allow us to compute
explicitly the infinitesimal generator of the tangent lift of the action, as
can be seen in the following
\begin{theorem}
    \label{thm:TangentLiftAction}
    Let $\Phi: G\times M \to M$ be a pre-Hamiltonian action of a
    Poisson Lie group with infinitesimal momentum map $\pulce$.
    \begin{itemize}
	\item[(i)] The infinitesimal generator $\fvf^T$ of the tangent lift of $\Phi$ is given by
	$$
	  \fvf^T(\xi)  = X_{i_{T}{\pulce_\xi}} + \pi^\sharp_{TM} \circ i_{T}{\dd\pulce_\xi}.
	$$
	\item[(ii)] If for each $\xi\in\g$, one has $\dd\pulce_{\xi}=0$, then the lifted
	(infinitesimal) action on $(TM, \pi_{TM})$ is Hamiltonian, with
	fiberwise-linear momentum map defined by $c_\xi = i_{T}{\pulce}_\xi$.
    \end{itemize}
\end{theorem}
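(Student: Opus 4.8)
The plan is to compute the tangent lift of the infinitesimal generator $\fvf(\xi) = \pi^\sharp(\pulce_\xi)$ directly, using the characterization of the linear Poisson structure $\pi_{TM}$ given in the lemma of Section~3 (namely $\pi^\sharp_{TM} \circ \alpha_M = k_M \circ T\pi^\sharp_M$) together with Lemma~\ref{lem:Tit} ($\alpha_M \circ T\theta = \dd_T\theta$). The key observation is that the tangent lift of a vector field $X \in \Sections(TM)$, viewed as a map $X: M \to TM$, is the composition $k_M \circ TX : TM \to TTM$; this is the standard description of the complete lift. So the first step is to write $\fvf^T(\xi) = k_M \circ T(\pi^\sharp_M \circ \pulce_\xi) = k_M \circ T\pi^\sharp_M \circ T\pulce_\xi$, where on the right $\pulce_\xi$ is regarded as a map $M \to T^*M$.

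Applying the lemma we get $k_M \circ T\pi^\sharp_M = \pi^\sharp_{TM} \circ \alpha_M$, hence $\fvf^T(\xi) = \pi^\sharp_{TM} \circ \alpha_M \circ T\pulce_\xi = \pi^\sharp_{TM} \circ \dd_T\pulce_\xi$ by Lemma~\ref{lem:Tit}. Now I would use the Cartan-type identity \eqref{eq:CartanDT}, $\dd_T\pulce_\xi = i_T\dd\pulce_\xi + \dd i_T\pulce_\xi$, to split this into two pieces: $\pi^\sharp_{TM}(i_T\dd\pulce_\xi) + \pi^\sharp_{TM}(\dd i_T\pulce_\xi)$. The second summand is exactly the Hamiltonian vector field on $(TM,\pi_{TM})$ of the function $i_T\pulce_\xi = c_\xi$, i.e. $X_{i_T\pulce_\xi}$. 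This yields precisely formula (i): $\fvf^T(\xi) = X_{i_T\pulce_\xi} + \pi^\sharp_{TM}\circ i_T\dd\pulce_\xi$.

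For part (ii), if $\dd\pulce_\xi = 0$ for every $\xi$, then the second term $\pi^\sharp_{TM}\circ i_T\dd\pulce_\xi$ vanishes identically, so $\fvf^T(\xi) = X_{c_\xi}$ with $c_\xi = i_T\pulce_\xi$, meaning the lifted action is generated by Hamiltonian vector fields. To conclude it is Hamiltonian in the group-equivariant sense one checks that $\xi \mapsto c_\xi$ is a Lie algebra homomorphism into $(\Cinfty(TM), \{\cdot,\cdot\}_{TM})$; but this is already contained in Proposition~\ref{prop:bialg}, whose proof shows $\{l_{\xi^\dagger}, l_{\eta^\dagger}\}_{TM} = l_{[\xi,\eta]^\dagger}$, i.e. $\{c_\xi, c_\eta\}_{TM} = c_{[\xi,\eta]}$, precisely the equivariance of the fiberwise-linear comomentum map. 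Since when $\dd\pulce_\xi = 0$ the bialgebra cocycle condition \eqref{def:InfinitesimalMomentumMap}(ii) forces $\pulce\wedge\pulce\circ\delta(\xi) = 0$, the PG-structure degenerates appropriately and $c$ is genuinely a momentum map for the lifted action.

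The main obstacle I anticipate is the careful bookkeeping in the first step: identifying the tangent lift $\fvf^T(\xi)$ of the generator with $k_M \circ T\fvf(\xi)$ requires knowing that the complete (tangent) lift of a vector field, as a section of $TTM \to TM$, is given by this composition with the canonical involution — and then verifying that the complete lift of the infinitesimal generator of $\Phi$ is indeed the infinitesimal generator of the tangent-lifted action $\Phi^T : G \times TM \to TM$. This is a compatibility between the functoriality of $T$ applied to the flow of $\fvf(\xi)$ and the naturality of $k_M$; it is standard (cf.~\cite{Yano1973}) but deserves an explicit sentence. Once that identification is in place, the rest is a clean substitution using the two isomorphism lemmas and the Cartan identity, with no further analytic difficulty.
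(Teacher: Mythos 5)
Your proposal is correct and follows essentially the same route as the paper: writing $\fvf^T(\xi)=k_M\circ T\fvf(\xi)=k_M\circ T\pi_M^\sharp\circ T\pulce_\xi$, then applying $\pi^\sharp_{TM}\circ\alpha_M=k_M\circ T\pi^\sharp_M$, Lemma~\ref{lem:Tit}, and the Cartan-type identity \eqref{eq:CartanDT} to obtain the decomposition in (i), with (ii) following immediately. Your extra remark on the equivariance $\{c_\xi,c_\eta\}=c_{[\xi,\eta]}$ via Proposition~\ref{prop:bialg} is a sound addition that the paper leaves implicit.
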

\begin{proof}
    \begin{itemize}
     \item[(i)] Since $\pulce$ generates the action, we have the  relation
    $$
	\fvf(\xi) = \pi_M^\sharp \circ \pulce_{\xi}.
    $$
     Moreover (see \cite{Grabowski1995} or \cite[p.365]{Mackenzie2005}),
    $$
	\fvf^T(\xi) = k_M \circ T(\fvf(\xi)).
    $$
    Now, by substituting the first relation in the second one, we obtain
    \begin{align*}
	\fvf^T(\xi)  =  k_M \circ T \pi_M^\sharp \circ T\pulce_\xi .
    \end{align*}
    By using \eqref{eq:TangentLift} and \eqref{eq: AlphaT},   we get
    \begin{align*}
    	\fvf^T(\xi) &= \pi^\sharp_{TM} \circ \alpha_M \circ T\pulce_\xi\\
		 &= \pi^\sharp_{TM} \circ \dd_{T}{\pulce_\xi}\\
		 &= \pi^\sharp_{TM} \circ (\dd i_{T}{\pulce_\xi}+ i_T \dd \pulce_\xi)\\
		 &= X_{i_{T}{\pulce_\xi}} + \pi^\sharp_{TM} \circ i_T \dd \pulce_\xi.
    \end{align*}
    \item[(ii)] Clearly, if $\dd\pulce_{\xi}=0$, we get $\fvf^T(\xi) = X_{i_{T}{\pulce_\xi}}$.
    \end{itemize}
\end{proof}

It is clear that if $\dd \pulce_\xi = 0$ for any $\xi\in \g$, then we
can reduce the tangent bundle by using the well-known
Theorem~\ref{thm:MR} of reduction of Poisson manifolds, since in
Theorem~\ref{thm:TangentLiftAction} we proved that in this case the
tangent lift of the action is Hamiltonian. In other words, the
reduction procedure obtained above recovers the reduction of Poisson
manifolds in the specific case in which the infinitesimal momentum map
associates a closed form to any element of the Lie bialgebra.  In
particular, this happens in the case of a symplectic action on a
symplectic manifold $(M,\omega_M)$. Then, the tangent bundle is also
symplectic, with the symplectic form given by $d_T\omega_{M}$.
\begin{corollary}
    \label{cor:SymplecticCase}
    Let $\g \to \vf(M)$ be a symplectic action of a Lie algebra $\g$ on a symplectic manifold $(M,\omega_M)$.
    Then, the lifted  action on $(TM, d_T\omega_{M})$ is Hamiltonian,
    with fiberwise-linear momentum map $c:TM\to \g^*$defined by $$c_\xi = i_{T}(i_{\fvf(\xi)}\omega_M).$$
\end{corollary}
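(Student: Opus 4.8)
The plan is to realise the corollary as the special case of Theorem~\ref{thm:TangentLiftAction}(ii) in which the Lie bialgebra is $(\g,\delta=0)$, the symplectic manifold is viewed as a Poisson manifold, and the infinitesimal momentum map takes values in closed $1$-forms.

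First I would construct the infinitesimal momentum map. Let $\pi$ be the Poisson structure induced by $\omega_M$, with the sign convention $\pi^\sharp=(\omega_M^\flat)^{-1}$ (this is the one consistent with the formula in the statement), and put $\pulce_\xi:=i_{\fvf(\xi)}\omega_M$. I then verify the three conditions of Definition~\ref{def:InfinitesimalMomentumMap} for the trivial cobracket $\delta=0$. The generation property \eqref{eq:InfinitesimalMomentumMap}, namely $\pi^\sharp(\pulce_\xi)=\fvf(\xi)$, is immediate. Condition~(ii) reads $\dd\pulce_\xi=\pulce\wedge\pulce\circ\delta(\xi)$, which with $\delta=0$ becomes $\dd\pulce_\xi=0$; this holds because $\dd(i_{\fvf(\xi)}\omega_M)=\lie_{\fvf(\xi)}\omega_M=0$ by Cartan's formula, using $\dd\omega_M=0$ and the hypothesis that the action is symplectic. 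For condition~(i), $\pulce_{[\xi,\eta]}=[\pulce_\xi,\pulce_\eta]_\pi$, I would apply $\pi^\sharp\colon(\Omega^1(M),[\cdot,\cdot]_\pi)\to(\vf(M),[\cdot,\cdot])$, which is a Lie algebra morphism for any Poisson manifold (it is the anchor of the Lie algebroid $T^*M$) and, in the symplectic case, a bijection; applying it to both sides reduces the identity to $\fvf([\xi,\eta])=[\fvf(\xi),\fvf(\eta)]$, valid since $\fvf$ is a Lie algebra action, and injectivity of $\pi^\sharp$ then gives~(i). Thus $\pulce$ is an infinitesimal momentum map and the action is pre-Hamiltonian. (That Theorem~\ref{thm:TangentLiftAction} is phrased for an action of a Poisson Lie group is not an obstacle: its proof is entirely infinitesimal, or one may take $G$ to be the connected and simply connected group integrating $(\g,0)$.)

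Now Theorem~\ref{thm:TangentLiftAction}(ii) applies verbatim: since $\dd\pulce_\xi=0$ for every $\xi$, the tangent lift $\fvf^T$ is a Hamiltonian action on $(TM,\pi_{TM})$ with fiberwise-linear momentum map $c_\xi=i_{T}\pulce_\xi=i_{T}(i_{\fvf(\xi)}\omega_M)$, fiberwise linearity being visible from \eqref{eq:iT}. The only remaining point, and the one I expect to be the sole non-formal step, is to identify $\pi_{TM}$ with the Poisson structure of the symplectic manifold $(TM,\dd_{T}\omega_M)$, i.e., to check that $(\dd_{T}\omega_M)^\flat$ and $\pi_{TM}^\sharp$ are inverse to each other. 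This is the statement that the complete (tangent) lift intertwines the correspondence between symplectic forms and Poisson bivectors; it follows from \eqref{eq:TangentLift} combined with Lemma~\ref{lem:Tit} (equivalently, from the general theory of complete lifts of tensor fields in \cite{Yano1973}), and if needed it can be confirmed by a short computation in the coordinates of \eqref{eq:d_Ttheta}. Once $(TM,\pi_{TM})=(TM,\dd_{T}\omega_M)$ is established, the statement of the corollary is precisely the conclusion of Theorem~\ref{thm:TangentLiftAction}(ii).
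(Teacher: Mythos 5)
Your proposal is correct and follows essentially the same route as the paper: the paper's proof simply declares the action ``clearly pre-Hamiltonian'' with $\pulce_\xi = i_{\fvf(\xi)}\omega_M$, checks closedness via $\LL_{\fvf(\xi)}\omega_M = \dd\, i_{\fvf(\xi)}\omega_M = 0$, and invokes Theorem~\ref{thm:TangentLiftAction}(ii). You merely make explicit the details the paper leaves implicit, namely the verification of condition~(i) of Definition~\ref{def:InfinitesimalMomentumMap} via injectivity of $\pi^\sharp$ and the identification of $\pi_{TM}$ with the Poisson structure inverse to $\dd_T\omega_M$.
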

\begin{proof}
    Under the above assumptions we have that the action is clearly pre-Hamiltonian with infinitesimal momentum map $\pulce: \g \to \Omega^1(M)$   given by
    $$
	\pulce_{\xi}=i_{\fvf(\xi)}\omega_{M}.
    $$
    Moreover, $\LL_{\fvf(\xi)}\omega_M=0$ implies $\dd i_{\fvf(\xi)}\omega_M = 0$ since
    $\omega_{M}$ is closed. %Thus $\dd \pulce_{\xi}=0$.
\end{proof}
Theorem~\ref{thm:TangentLiftAction} allows us to show that, in the case of a (free and proper) pre-Hamiltonian $G-$action, the reduced Poisson manifold $(TM)_{red}\,=\,\leaf$ of Theorem~\ref{thm:TangentReduction}  is the orbit space of the lifted action of $G$ on $C \subseteq TM$.

\begin{theorem}\label{thm:TangentReductionG}
    Let  $\Phi: G\times M \to M$ be a  pre-Hamiltonian action of a Poisson Lie group
    with infinitesimal momentum  map $\pulce$ and comomentum map $c$.
    Then the reduced tangent bundle is given by $(TM)_{red} = C/G$.
    Moreover $(TM)_{red}\to M/G$ is a Lie algebroid.
\end{theorem}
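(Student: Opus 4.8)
The plan is to establish the two assertions of Theorem~\ref{thm:TangentReductionG} separately. First I would identify the characteristic foliation of the coisotropic submanifold $C = c^{-1}(0) \subseteq TM$ with the orbit foliation of the lifted $G$-action on $C$, so that $(TM)_{red} = \leaf$ becomes $C/G$. The natural strategy here mimics the last paragraph of Section~\ref{sec:Hamiltonian}: one shows that the characteristic distribution on $C$, which is spanned by the Hamiltonian vector fields $X_f$ with $f \in \mathscr{I}_C$, coincides with the distribution tangent to the $G$-orbits. Since $0 \in \g^*$ is a single symplectic leaf of the Lie--Poisson structure, the ideal $\mathscr{I}_C$ is generated (locally) by the functions $c_\xi = i_T \pulce_\xi$ for $\xi \in \g$, so the characteristic distribution is spanned by the vectors $X_{c_\xi} = \pi^\sharp_{TM}(\dd c_\xi) = \pi^\sharp_{TM}(\dd i_T \pulce_\xi)$. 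On the other hand, by Theorem~\ref{thm:TangentLiftAction}(i) the infinitesimal generator of the lifted action is $\fvf^T(\xi) = X_{i_T \pulce_\xi} + \pi^\sharp_{TM} \circ i_T \dd\pulce_\xi$. The two differ by $\pi^\sharp_{TM} \circ i_T \dd \pulce_\xi$, so the key point is to check that this extra term is tangent to $C$ and in fact already lies in the span of the $X_{c_\eta}$ along $C$ — this will follow from the PG-map condition $\dd\pulce_\xi = \pulce \wedge \pulce \circ \delta(\xi)$, which expresses $\dd\pulce_\xi$ in terms of the $\pulce_\eta$'s and hence, after applying $i_T$ and $\pi^\sharp_{TM}$, in terms of the generators $\fvf^T(\eta)$. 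Thus on $C$ the span of $\{\fvf^T(\xi)\}$ equals the span of $\{X_{c_\xi}\}$, i.e. the orbit distribution equals the characteristic distribution, and freeness and properness of the action on $M$ (lifted to $TM$) guarantee that $C/G$ is the smooth leaf space, so $(TM)_{red} = C/G$.

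For the second assertion, that $(TM)_{red} \to M/G$ is a Lie algebroid, the plan is to exhibit $(TM)_{red}$ as a quotient of Lie algebroids. The bundle $TM \to M$ is a Lie algebroid (the tangent algebroid), $G$ acts on it by the tangent lift of the action, and $C \subseteq TM$ is a $G$-invariant subbundle over $J$-preimage-type base — more precisely, since $c$ is fiberwise linear and $0$ is a regular value, $C$ is a vector subbundle of $TM$ over all of $M$, consisting of those tangent vectors annihilated by all $\pulce_\xi$. I would show that $C$ is a Lie subalgebroid of $TM$: it is closed under the Lie bracket of vector fields because $c$ is a Lie algebroid morphism (Proposition~\ref{prop:bialg} gives that $c: TM \to \g^*$ is a Lie bialgebroid morphism, in particular a Lie algebroid morphism to the zero algebroid over a point, so its kernel $C$ is a Lie subalgebroid). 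Then $G$ acts on the Lie algebroid $C \to M$ freely and properly by Lie algebroid automorphisms, and the general theory of quotients of Lie algebroids by free proper actions (the Atiyah-type quotient, cf. the base projection $C/G \to M/G$) produces a Lie algebroid structure on $C/G = (TM)_{red}$ over $M/G$. Alternatively, and perhaps more in the spirit of the paper, one can use the fact that $(TM)_{red}$ carries a Poisson structure (Theorem~\ref{thm:TangentReduction}) which is fiberwise linear over $M/G$ — linearity being inherited from the linearity of $\pi_{TM}$ on $TM$ and preserved under the reduction because the constraints $c_\xi$ are themselves fiberwise linear — and then invoke the standard correspondence between fiberwise-linear Poisson structures on a vector bundle and Lie algebroid structures on its dual.

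The main obstacle I anticipate is the first step: verifying cleanly that the characteristic distribution of $C$ equals the orbit distribution of the lifted $G$-action. The subtlety is the discrepancy term $\pi^\sharp_{TM} \circ i_T \dd\pulce_\xi$ in $\fvf^T(\xi)$, which is not in general zero (it vanishes only in the symplectic/closed case of Theorem~\ref{thm:TangentLiftAction}(ii)). One must use condition (ii) of the PG-map definition to rewrite $i_T \dd\pulce_\xi$ using $i_T(\pulce \wedge \pulce \circ \delta(\xi))$ and then apply the Leibniz rule \eqref{eq:iTLeibniz} for $i_T$ on wedge products; this expresses the term as a combination (with $\Cinfty(TM)$-coefficients that restrict suitably to $C$) of $\tau_M^*$-pullbacks of $\pulce_\eta$ paired with $c_\eta = i_T\pulce_\eta$, and on $C$ the factors $c_\eta$ vanish while the surviving terms reproduce combinations of the $X_{c_\eta}$. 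Making this bookkeeping precise, and checking that the fiberwise-linearity of the reduced Poisson structure genuinely survives the quotient, are where the real work lies; once those are in hand, the conclusion that $(TM)_{red} \to M/G$ is a Lie algebroid is essentially formal given the linear-Poisson/Lie-algebroid duality recalled earlier in the paper.
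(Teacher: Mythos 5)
Your proposal is correct and follows essentially the same route as the paper: expand $\fvf^T(\xi) = X_{c_\xi} + \pi^\sharp_{TM}\circ i_T\dd\pulce_\xi$, use the PG-condition $\dd\pulce_\xi = \pulce\wedge\pulce\circ\delta(\xi)$ together with the Leibniz rule \eqref{eq:iTLeibniz} to dispose of the discrepancy term on $C$, identify the characteristic leaves with the $G$-orbits via the generation of $\mathscr{I}_C$ by the components $c_i$, and obtain the Lie algebroid structure as the quotient of the Lie subalgebroid $C=\ker c$ by the free and proper lifted $G$-action. The only small refinement is that the discrepancy term does not merely land in the span of the $X_{c_\eta}$: every summand of $\pi^\sharp_{TM}(i_T\dd\pulce_\xi)$ carries a factor $c_j$ or $c_k$, so it vanishes identically on $C$ and one gets $i^*\fvf^T(\xi)=i^*X_{c_\xi}$ on the nose.
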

\begin{proof}
    Let $\{e^i\}_{i=1, \dots, n}$  be a basis of $\g^*$ and $c_i : TM\to \R$ the
    components of $c$. Thus,
    $$
	c =  \sum_i c_i e^i.
    $$
    Since $ C = c^{-1}(0)$ and $\mathscr{I}_C$ is the set of functions vanishing on $C$,
    then by \cite[Lemma 5]{Bordemann2000} any $f \in \mathscr{I}_C$ can be written as
    \begin{equation}\label{eq:Xf}
	f =  \sum_i f^i c_i,
    \end{equation}
    where
    $$
	f^i: TM \to \R.
    $$
    %(This has been proved in \cite{Bordemann2000}).
%    Recall that to $c:TM\to \g^*$ we associated the map $$\g \to \Cinfty (TM): \xi \mapsto c_\xi=\xi\circ c.$$
%    In particular, to an element $e_i$ it associates
%    $c_{e_i}=c_i$.
    % \footnote{We should have written $c^*_i = c(e_i)$ to be precise and to be coherent with the previous lemma but it's already quite heavy notation}.
    Consider the inclusion $i: C \to TM$ and a Hamiltonian vector field $X_f$ on $TM$
    (recall that they span the characteristic distribution on $C$). From \eqref{eq:Xf} we have
    $$
	i^* X_f =  \sum_i i^*(f^i X_{c_i} + c_i X_{f^i}),
    $$
    by the Leibniz rule. The term $i^*(c_i X_{f^i})$ is zero because $c_i$ vanishes on $C$. So we get
    \begin{equation}\label{prf:Xf}
      i^* X_f = \sum_i i^*(f^i X_{c_i}).
    \end{equation}
    From Theorem~\ref{thm:TangentLiftAction} and \eqref{eq:ComomentumTangentDer}, we have
    \begin{equation}\label{eq:fit}
    \fvf^T(e_i) = X_{i_T \pulce_i} + \pi^\sharp_{TM} \circ i_{T}{\dd\pulce_i} = X_{c_i} + \pi^\sharp_{TM} \circ i_{T}{\dd\pulce_i},
    \end{equation}
    where $\pulce_i:=\pulce({e_i})$. We have
    \[
    \delta (e_i) = \sum_{j<k} \gamma^{jk}_i e_{j} \wedge e_{k},
    \]
    where $\gamma^{jk}_{i}$  are some real constants.
    Now, using the property $\dd\pulce_\xi = \pulce \wedge \pulce \circ \delta (\xi)$ we can write
    %and the Sweedler notation, we can write
    \[
    i_{T}{\dd\pulce_i} = \sum_{j<k} \gamma^{jk}_i i_{T}({\pulce_j\wedge\pulce_k}).
    \]
    Hence, from \eqref{eq:ComomentumTangentDer} and \eqref{eq:iTLeibniz}  we get
    $$
    i_{T}{\dd\pulce_i} = \sum_{j<k}\gamma^{jk}_i ( c_j \wedge \tau_M^*\pulce_k - \tau_M^*\pulce_j \wedge c_k ).
    $$
    Thus, since the $c_i$\,s' are functions, we have
    $$
    \pi^\sharp_{TM} ( i_{T}{\dd\pulce_i} )=\sum_{j<k}\gamma^{jk}_i( c_j \pi^\sharp_{TM} ( \tau_M^*\pulce_k ) - c_k \pi^\sharp_{TM} ( \tau_M^*\pulce_j) ).
    $$
    From \eqref{eq:fit}, by using this relation we get
    \begin{equation}\label{prf:Xi}
      i^* \fvf^T(e_i) = i^* (X_{c_i} + \pi^\sharp_{TM} (i_{T}{\dd\pulce_i})) = i^* X_{c_i}
    \end{equation}
    because the functions $c_i$\,s' vanish on $C$. Substituting \eqref{prf:Xi} in \eqref{prf:Xf} we
    have
    $$
	i^* X_f =\sum_i i^*(f^i) \cdot i^*\fvf^T(e_i). %=   i^*f^i (e_i)		=  (f^i (e_i))|_C.
    $$
    We have proved that the leaves of the characteristic distribution are the $G$-orbits.
    %conti in dettaglio nel file dettagliCR.pdf

    It remains to prove that $(TM)_{red}\to M/G$ is a Lie algebroid. %
    over $M/G$ First note that $C=c^{-1}(0)\to M$ is a Lie
    subalgebroid of the tangent bundle, as $c$ is a morphism of Lie
    (bi)algebroids.  Moreover, the free and proper action of $G$ on
    $M$ lifts to free and proper action on $TM$ by Lie algebroid
    automorphisms. Thus, since the lifted action restricts to an
    action by Lie algebroid automorphisms on $C$, one gets a quotient
    Lie algebroid $C/G\to M/G$ (see
    e.g. \cite{Mackenzie2005,Marrero2012}).
\end{proof}

\begin{remark}
  As an example, let us consider the dressing action $G \times G^* \to
  G^*$, which is Poisson Hamiltonian with momentum map $J =
  \id$. Thus, using the linearization $T G^*\cong G^* \times
  \mathfrak{g}^*$ and the definition of the comomentum map
  (\ref{eq:ComomentumMap}), we get
  $$
    c = \pr_{\mathfrak{g}^*}\colon G^* \times \mathfrak{g}^* \to \mathfrak{g}^*.
  $$
  Hence, in this case the reduction of the tangent bundle gives as a result  the space
  of orbits of the dressing action:
  $$
  (TG^*)_{red} = G^*/G.
  $$
\end{remark}
%%%%%%%%%%%%%%%%%%%%%%%%%%%%%%%%%%%%%%%%%%%%%%%%%%%%%%%%%%%%%%%%%%%%%%%%%%%%%%%%%%%%%%%%%%%%%%%%%%%%%%%%%%%%%%%%%%%%%%%%%%%%%%%%%%%%%%%%%%%%%%
%%%%%%%%%%%%%%%%%%%%%%%%%%%%%%%%%%%%%%%%%%%%%%%%%%%%%%%%%%%%%%%%%%%%%%%%%%%%%%%%%%%%%%%%%%%%%%%%%%%%%%%%%%%%%%%%%%%%%%%%%%%%%%%%%%%%%%%%%%%%%%
%%%%%%%%%%%%%%%%%%%%%%%%%%%%%%%%%%%%%%%%%%%%%%%%%%%%%%%%%%%%%%%%%%%%%%%%%%%%%%%%%%%%%%%%%%%%%%%%%%%%%%%%%%%%%%%%%%%%%%%%%%%%%%%%%%%%%%%%%%%%%%

\subsection{Relation with the Hamiltonian case}
Let us consider the particular case in which the pre-Hamiltonian
action is Hamiltonian, so we have a momentum map $J : M \to \g^*$ and
$\pulce_\xi = \dd J_\xi$ (for instance, this occurs if $\pulce_\xi =
J^*(\theta_\xi)$ and $\pi_G=0$).  As recalled in
Section~\ref{sec:Hamiltonian}, in this case we have the well-known
reduction Theorem \ref{thm:MR} which gives a reduced Poisson manifold
$M_{red} = J^{-1}(0)/G$. The following theorem gives the relation
between $M_{red}$ and $(TM)_{red}$.
\begin{theorem}\label{thm:comparison}
Let $(M , \pi)$ be  a Poisson manifold endowed with a Hamiltonian
    action of a Lie group $G$  and assume that $0\in \g\st$ is a regular value
    for the momentum map $J:M \to \g^*$.
Then,  we have
%    \begin{equation*}\label{eq:tmred1}
\(
	(TM)_{red} = (\dd_{T}J)^{-1}(0)/G.
\)
%    \end{equation*}
Moreover, there is a connection dependent isomorphism of vector bundles
    $$
	(TM)_{red}|_{M_{red}} \cong T (M_{red})+ \mathfrak{\tilde{g}},
    $$ where $\mathfrak{\tilde{g}}$ is the associated bundle to the
principal bundle $J^{-1}(0)\to J^{-1}(0)/G$ by the adjoint action of
$G$ on $\mathfrak{g}$.
\end{theorem}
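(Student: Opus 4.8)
The plan is to prove the two assertions in sequence, reusing the structure of the proof of Theorem~\ref{thm:TangentReductionG}. First I would identify the coisotropic submanifold $C = c^{-1}(0)$ explicitly in the Hamiltonian case. Since $\pulce_\xi = \dd J_\xi$, formula \eqref{eq:ComomentumTangentDer} gives $c_\xi = i_T \dd J_\xi$, and by the Cartan-type formula \eqref{eq:CartanDT} together with $i_T J_\xi = 0$ on functions one has $i_T \dd J_\xi = \dd_T J_\xi - \dd i_T J_\xi = \dd_T J_\xi$ (more precisely, $\dd_T$ applied to the function $J_\xi$, i.e. the complete lift of $J_\xi$ to $TM$, which is the fiberwise-linear function $v \mapsto \langle \dd J_\xi, v\rangle$). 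Hence $C = c^{-1}(0) = (\dd_T J)^{-1}(0)$, viewing $\dd_T J \colon TM \to \g^*$ as the complete lift of $J$. By Theorem~\ref{thm:TangentLiftAction}(ii) the lifted action is Hamiltonian with momentum map $c = \dd_T J$, so Theorem~\ref{thm:TangentReductionG} identifies the leaf space with the orbit space, giving $(TM)_{red} = (\dd_T J)^{-1}(0)/G$. This takes care of the first displayed equality.

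For the vector bundle isomorphism, I would work over the reduced manifold $M_{red} = J^{-1}(0)/G$. The key observation is that $(\dd_T J)^{-1}(0)$, as a subset of $TM$, fibers over $J^{-1}(0)$: a tangent vector $v \in T_m M$ lies in $C$ iff $J(m) = 0$ (since $c$ is fiberwise linear, $c(0_m) = 0$ forces $m \in J^{-1}(0)$, but actually one needs $0$ to be hit, so $m\in J^{-1}(0)$) and $\langle \dd J_\xi, v\rangle = 0$ for all $\xi$, i.e. $v \in \ker(T_m J) = T_m(J^{-1}(0))$. Wait — I must be careful: $c(0_m)=0$ holds for every $m$, so $C \cap T_m M \ne \emptyset$ for all $m$, but $C$ restricted over $J^{-1}(0)$ equals $T(J^{-1}(0))$, while over points outside $J^{-1}(0)$ the fiber $C\cap T_m M$ is an affine subspace not through nonzero structure in the relevant way. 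The clean statement I actually want is the restriction $C|_{J^{-1}(0)} = T(J^{-1}(0))$, and then $(TM)_{red}|_{M_{red}} = T(J^{-1}(0))/G$. Now I would choose a principal connection for the $G$-bundle $J^{-1}(0) \to M_{red}$, which splits $T(J^{-1}(0)) = \mathcal{H} \oplus \mathcal{V}$ into horizontal and vertical parts $G$-equivariantly; the vertical bundle is $J^{-1}(0) \times \g$ (trivialized by the generators of the action), and the horizontal bundle descends to $T(M_{red})$. Taking the quotient by $G$: the horizontal part gives $T(M_{red})$, and the vertical part gives the associated bundle $J^{-1}(0) \times_G \g = \mathfrak{\tilde g}$. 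Hence $(TM)_{red}|_{M_{red}} \cong T(M_{red}) + \mathfrak{\tilde g}$, the connection-dependence entering precisely through the chosen splitting.

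The main obstacle I anticipate is pinning down carefully the identification $C|_{J^{-1}(0)} = T(J^{-1}(0))$ and checking that the quotient really is taken only over $J^{-1}(0)$ rather than all of $M$ — i.e.\ justifying the restriction $(\cdot)|_{M_{red}}$ in the statement, since $C = (\dd_T J)^{-1}(0)$ as a submanifold of $TM$ projects onto all of $M$ (its fiber over $m\notin J^{-1}(0)$ is a proper affine subspace), yet its image under the leaf-space projection that is relevant to the bundle structure over $M_{red}$ only sees $J^{-1}(0)$. Concretely: since $0$ is a regular value of $J$, it is a regular value of $\dd_T J$ along $T(J^{-1}(0))$, the characteristic foliation there is by $G$-orbits (Theorem~\ref{thm:TangentReductionG}), and the resulting quotient bundle over $M_{red}$ is what the theorem describes. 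The remaining verification — that horizontal quotient is $TM_{red}$ and vertical quotient is $\mathfrak{\tilde g}$ — is the standard argument for the structure of $T(P)/G$ for a principal bundle $P$, which I would cite rather than reproduce in full.
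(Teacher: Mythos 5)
Your proposal is correct and follows essentially the same route as the paper: compute $c_\xi = i_T\,\dd J_\xi = \dd_T J_\xi$ to get $(TM)_{red} = (\dd_T J)^{-1}(0)/G$, identify $(\dd_T J)^{-1}(0)|_{J^{-1}(0)} = T(J^{-1}(0))$, and then split $T(J^{-1}(0))/G$ into $T(M_{red}) + \mathfrak{\tilde g}$ via a principal connection. The only cosmetic difference is that you sketch the horizontal/vertical splitting of $TP/G$ by hand, whereas the paper cites it as Lemma~2.4.2 of the Cendra--Marsden--Ratiu memoir; your remark about the restriction to $J^{-1}(0)$ being what makes the statement well posed matches the paper's use of equivariance of the tangent projection.
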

\begin{proof}
    Recall that
    $$
	M_{red} = J^{-1}(0)/G,
    $$
    where $J: M \to \g^*$ and
    $$
	(TM)_{red} = c^{-1}(0)/G,
    $$
    where $c:TM\to \g^{*}$ is the comomentum map associated to $\pulce$.
    Moreover, we have
    \[
	\pulce_{\xi} = \dd J_{\xi},
    \]
    for each $\xi\in\g$.
    Hence $c_{\xi} : TM \to \R$ is given by
    \begin{equation}\label{eq:cixi}
	c_{\xi} = i_{T}\pulce_{\xi} = i_{T} \dd J_{\xi} = \dd_{T}J_{\xi}.
    \end{equation}
    We can  extend the action of $\dd_{T}$ to act on $\g^*$-valued functions such as $J$ in an obvious way, giving as a result
    $\dd_{T}J : TM \to \g^*$. Then, from \eqref{eq:cixi} we obtain $c=\dd_{T}J$ and hence
    \begin{equation}\label{eq:tmred}
	(TM)_{red} = (\dd_{T}J)^{-1}(0)/G.
    \end{equation}

    Now, we have
    \begin{equation}\label{eq:hope}
	(\dd_{T}J)^{-1}(0)|_{J^{-1}(0)}=T(J^{-1}(0)).
    \end{equation}
    Indeed, if $v\in T(J^{-1}(0))\subset TM$ then $J(\tau_M(v))=0$ and  $\langle \dd J(\tau_M(v)), v \rangle=0$. Hence
    \begin{equation*}
	(\dd_{T}J)(v)=i_T \dd J(v)=0.
    \end{equation*}
    Conversely, if $v\in (\dd_{T}J)^{-1}(0)$ and $J(\tau_M(v))=0$ then $\tau_M(v)\in J^{-1}(0)$ and
    \begin{equation*}
	0=(\dd_{T}J)(v)=i_T \dd J(v)=\langle \dd J(\tau_M(v)), v \rangle.
    \end{equation*}
    Thus, we get $v\in T(J^{-1}(0))$.

    We use now the following fact. Since the lifted action of $G$ on
    $TM$ is free and proper, by \cite[Lemma 2.4.2]{Marsden2001} one
    has a connection dependent isomorphism
    \[
	(TM)/G\cong T (M/G) + \mathfrak{\tilde{g}},
    \]
where $\mathfrak{\tilde{g}}$ is the associated bundle to the principal bundle $M\to M/G$ by the adjoint action of $G$ on $\mathfrak{g}$.
    If we apply the above result to $J^{-1}(0)$,  we get
        \begin{align*}
(TJ^{-1}(0))/G\cong T (J^{-1}(0)/G) + \mathfrak{\tilde{g}}= T(M_{red}) + \mathfrak{\tilde{g}}
    \end{align*}
    and hence by \eqref{eq:hope} we obtain
    \begin{align*}
    (\dd_{T}J)^{-1}(0)|_{J^{-1}(0)}/G\cong T(M_{red}) + \mathfrak{\tilde{g}},
    \end{align*}
    where $\mathfrak{\tilde{g}}$ is the associated bundle to the
    principal bundle $J^{-1}(0)\to J^{-1}(0)/G$ by the adjoint action
    of $G$ on $\mathfrak{g}$.  On the other hand, from
    \eqref{eq:tmred} we have
        \begin{align*}
	(TM)_{red}|_{M_{red}}=(\dd_{T}J)^{-1}(0)/G|_{J^{-1}(0)/G} =(\dd_{T}J)^{-1}(0)|_{J^{-1}(0)}/G,
    \end{align*}
since the tangent projection is equivariant with respect to the considered group actions.  Hence we conclude that
        \begin{align*}
	(TM)_{red}|_{M_{red}}\cong T(M_{red}) + \mathfrak{\tilde{g}}.
    \end{align*}
\end{proof}

This theorem shows that in case of a Hamiltonian action our reduced
tangent bundle is closely related to the tangent bundle of the
classical Marsden-Ratiu reduced manifold.

Note that a part of Theorem~\ref{thm:comparison} in the special case of symplectic manifolds was already proved in \cite[Theorem~4.1]{Garcia2014}.

\section{Integration of the reduced tangent bundle}

In this section we give an interpretation of the reduced tangent
bundle in terms of symplectic groupoids. In particular, we consider
the case of a symplectic action of a Lie group $G$ on a symplectic
manifold $M$.  On the one hand, we have shown that in this case the
lifted action on the tangent bundle $TM$ is Hamiltonian so that we
obtain a reduced tangent bundle $(TM)_{red}$ which is a symplectic
manifold.  On the other hand, we can apply to our case the results of
\cite{Mikami1988} and
\cite{Fernandes2009b} that hold for a canonical action on an
integrable Poisson manifold $(M, \pi)$. Now, every symplectic manifold
is integrable and the corresponding symplectic groupoid that can be
identified with the fundamental groupoid $\Pi(M)$ of $M$.  Hence, the
action of $G$ on $M$ can be lifted to a Hamiltonian action on the
fundamental groupoid $\Pi(M) \rightrightarrows M$.  This implies that
the symplectic groupoid $\Pi(M)$ can be reduced via Marsden-Weinstein
procedure to a new symplectic groupoid $(\Pi(M))_{red}\rightrightarrows
M/G$. We prove that our reduced tangent bundle $(TM)_{red}$ is the Lie
algebroid corresponding to the reduced symplectic groupoid
$(\Pi(M))_{red}$.

First, let us recall the needed results from \cite{Fernandes2009b}.
\begin{theorem}[\cite{Fernandes2009b}]%teorema 3.3 di Fernandes2009b + corollary. 4.7 pag 20
\label{thm:OidAction}
    Let $G \times M \to M$ a free and proper canonical action on an integrable Poisson
manifold $(M, \pi)$.
    There exists a unique lifted action
    of $G$ on $\Sigma (M) \rightrightarrows M$ by symplectic groupoid automorphisms.
    This lifted action is free and proper and Hamiltonian. Let $J : \Sigma (M) \to \g^*$ denote its momentum map.
    Then, the reduced symplectic
groupoid, given by
    $$
    (\Sigma(M))_{red} = J^{-1}(0)/G
    $$
    is a symplectic groupoid integrating $M/G$.

\end{theorem}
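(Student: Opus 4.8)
\emph{Overview and the lifted action.} The plan is to realize the symplectic groupoid $\Sigma(M)\rightrightarrows M$ as the source-simply-connected integration of the cotangent Lie algebroid $T^*M$ of $(M,\pi)$ and to transport everything through this identification. First I would lift the action: each $g\in G$ acts on $M$ by a Poisson diffeomorphism $\Phi_g$, which induces a Lie algebroid automorphism of $T^*M$ covering $\Phi_g$; by Lie's second theorem for source-simply-connected Lie groupoids this integrates to a \emph{unique} Lie groupoid automorphism $\widehat\Phi_g$ of $\Sigma(M)$, and the Poisson-geometric refinement of Lie~II guarantees that $\widehat\Phi_g$ preserves the multiplicative symplectic form. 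Uniqueness makes $g\mapsto\widehat\Phi_g$ a homomorphism and the parametrized form of Lie~II gives smoothness, yielding the unique lifted action by symplectic groupoid automorphisms. Freeness and properness are inherited from $\Phi$: if $\widehat\Phi_g$ fixes $\sigma$ it fixes $\mathbf s(\sigma)$ and $\mathbf t(\sigma)$, forcing $g=e$, while properness follows by pushing a convergent net in $\Sigma(M)$ down along $\mathbf t$.

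\emph{The momentum map.} Next I would describe $\Sigma(M)$ by cotangent-homotopy classes of cotangent paths $a\colon[0,1]\to T^*M$ over base paths $\gamma$, and set $J\colon\Sigma(M)\to\g^*$,
\[
J_\xi([a])=\int_0^1\bigl\langle a(t),\fvf(\xi)(\gamma(t))\bigr\rangle\,\dd t,\qquad \xi\in\g .
\]
Since $\fvf(\xi)$ is an infinitesimal Poisson automorphism, this integral is invariant under cotangent homotopies, so $J$ is well defined and vanishes on the unit section. Using the explicit formula for the symplectic form of $\Sigma(M)$ (induced by the canonical form on $T^*M$) one checks that $X_{J_\xi}$ is the infinitesimal generator $\fvf^{\Sigma}(\xi)$ of the lifted action, so the lift is Hamiltonian; concatenation of cotangent paths shows $J$ is multiplicative, $J(\sigma\tau)=J(\sigma)+J(\tau)$, and this together with $J$ vanishing on units and the fact that $G$ acts by Poisson maps forces $\mathrm{Ad}^*$-equivariance for the Lie--Poisson structure on $\g^*$.

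\emph{Reduction.} For the reduction step: since the lifted action is free, the image of $\dd J_\sigma$ is the annihilator of the (trivial) isotropy subalgebra, so $\dd J$ is surjective along $J^{-1}(0)$; as $0$ lies in the image ($J$ vanishes on the nonempty unit section), $0$ is a regular value and $J^{-1}(0)$ is a submanifold. Multiplicativity makes $J^{-1}(0)$ a wide Lie subgroupoid of $\Sigma(M)\rightrightarrows M$ on which $G$ acts freely, properly and by groupoid automorphisms, so $(\Sigma(M))_{red}=J^{-1}(0)/G$ is a Lie groupoid over $M/G$. The restriction to $J^{-1}(0)$ of the multiplicative symplectic form of $\Sigma(M)$ is $G$-invariant, multiplicative and basic — this is precisely the Marsden--Weinstein computation — hence descends to a multiplicative symplectic form on $(\Sigma(M))_{red}$, which is therefore a symplectic groupoid; its base Poisson structure is the quotient Poisson structure on $M/G$ (well defined because $\Phi$ is canonical, free and proper), since the descended target map intertwines $\mathbf t\colon J^{-1}(0)\to M$ with $M\to M/G$. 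Finally, a symplectic groupoid over a Poisson manifold $N$ always integrates $T^*N$; applying this to $N=M/G$ shows $(\Sigma(M))_{red}$ integrates $M/G$.

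\emph{Main obstacle.} The delicate part is the construction and verification of the momentum map $J$ on $\Sigma(M)$: cotangent-homotopy invariance of the path integral, the identity $X_{J_\xi}=\fvf^{\Sigma}(\xi)$ through the groupoid symplectic form, and $\mathrm{Ad}^*$-equivariance together with multiplicativity. These are exactly the properties that make $J^{-1}(0)$ a subgroupoid and let the Marsden--Weinstein quotient inherit a symplectic-groupoid structure over $M/G$, and they constitute the technical heart of \cite{Fernandes2009b}; the lifting and the identification of the reduced base are comparatively formal consequences of Lie~II and of the general theory of symplectic groupoids.
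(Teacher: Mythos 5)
The paper does not prove this statement: it is quoted verbatim from Fernandes--Ortega--Ratiu \cite{Fernandes2009b} as background for Section 5, so there is no in-paper argument to compare against. Your sketch is a correct reconstruction of the proof in that reference — the same route via the source-simply-connected integration $\Sigma(M)$ of $T^*M$, Lie~II for the unique lift of each cotangent-lifted Poisson automorphism, the momentum map $J_\xi([a])=\int_0^1\langle a(t),\fvf(\xi)(\gamma(t))\rangle\,\dd t$ on cotangent-homotopy classes (well defined precisely because each $\fvf(\xi)$ is a Poisson vector field), its multiplicativity and equivariance, and the Marsden--Weinstein quotient of the wide subgroupoid $J^{-1}(0)$ yielding a multiplicative symplectic form over $M/G$. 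You also correctly identify the technical heart (homotopy invariance and $X_{J_\xi}=\fvf^{\Sigma}(\xi)$) as the content carried by the cited paper.
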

Note that the symplectic form on $(\Sigma(M))_{red}$ allows us to identify
the Lie algebroid $A((\Sigma(M))_{red})$ with the cotangent Lie algebroid $T^*(M/G)$.

We will also use the following well-known result on the cotangent bundle reduction.
\begin{theorem}[\cite{Abraham1978}]\label{thm:CotangentAction} % secondo rui 2009, corollary 4.12, questo sta in Abraham-Marsden
    Given a free and proper action of a Lie group $G$ on $M$,
    the cotangent lift of the action is Hamiltonian with momentum map given by
        \[
    \langle j(\alpha_m),\xi\rangle=\alpha_m (\fvf_\xi (m)),
    \]
for any $\alpha_m\in T_m^*M$, $\xi\in\g$.
    Moreover, we have
    $$
    (T^*M)_{red} \cong T^* (M/G).
    $$
\end{theorem}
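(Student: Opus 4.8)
The plan is to prove the three assertions of Theorem~\ref{thm:CotangentAction} in turn: that the cotangent-lifted action is symplectic, that it is Hamiltonian with the stated momentum map $j$, and that reduction at the regular value $0$ yields $T^*(M/G)$. These are classical facts, but here is how I would organize the argument.

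First I would recall that the cotangent lift of any diffeomorphism of $M$ preserves the tautological $1$-form $\theta$ on $T^*M$; hence the cotangent lift of the $G$-action preserves $\theta$, and therefore $\omega_{T^*M} = -\dd\theta$, so the lifted action is symplectic and its infinitesimal generators $\fvf^{T^*}_\xi$ satisfy $\LL_{\fvf^{T^*}_\xi}\theta = 0$. Next I would invoke the classical identity $i_{\fvf^{T^*}_\xi}\theta = j_\xi$, where $j_\xi := \langle j,\xi\rangle$ with $j_\xi(\alpha_m) = \alpha_m(\fvf_\xi(m))$: both sides are fibrewise-linear functions on $T^*M$, and the identity is immediate in canonical coordinates since $\fvf^{T^*}_\xi$ projects onto $\fvf_\xi$. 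Cartan's formula then gives
\[
i_{\fvf^{T^*}_\xi}\omega_{T^*M} = -\,i_{\fvf^{T^*}_\xi}\dd\theta = -\LL_{\fvf^{T^*}_\xi}\theta + \dd\, i_{\fvf^{T^*}_\xi}\theta = \dd j_\xi ,
\]
so $\fvf^{T^*}_\xi = X_{j_\xi}$ and the action is Hamiltonian; equivariance of $j$ (with no cocycle) follows from the naturality of $\theta$ under the $G$-action, equivalently from $\{j_\xi,j_\eta\} = j_{[\xi,\eta]}$, which is a consequence of $i_{\fvf^{T^*}_\xi}\theta = j_\xi$ and the bracket relations among the $\fvf^{T^*}_\xi$.

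For the reduced space I would argue as follows. Freeness of the action on $M$ makes the generators $\fvf_\xi(m)$ linearly independent for $\xi\neq 0$, so the restriction of $j$ to each cotangent fibre is surjective; hence $j$ is a submersion, $0$ is a regular value, and
\[
j^{-1}(0) = \{\alpha_m \in T^*M : \alpha_m|_{T_m(G\cdot m)} = 0\} = \{\alpha_m : \alpha_m|_{\ker T_m p} = 0\},
\]
where $p : M \to M/G$ is the orbit projection. Sending $\alpha_m$ to the unique covector on $M/G$ it descends to identifies $j^{-1}(0)$ with the pullback bundle $p^*(T^*(M/G))$ over $M$, $G$-equivariantly, so passing to the quotient gives a diffeomorphism $(T^*M)_{red} = j^{-1}(0)/G \xrightarrow{\sim} T^*(M/G)$.

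The step I expect to be the main obstacle is checking that this diffeomorphism is a symplectomorphism. For this I would show by a pointwise computation, using the definition of the tautological form together with $T\iota$ and $Tp$, that under the identification above $\theta_{M/G}$ on $T^*(M/G)$ pulls back to the restriction $\iota^*\theta_{T^*M}$, where $\iota : j^{-1}(0)\hookrightarrow T^*M$. Since the reduced form $\omega_{red}$ on $j^{-1}(0)/G$ is characterized by $\pi^*\omega_{red} = \iota^*\omega_{T^*M} = -\,\iota^*\dd\theta_{T^*M}$ with $\pi : j^{-1}(0)\to j^{-1}(0)/G$, and $\omega_{T^*(M/G)} = -\dd\theta_{M/G}$, the two forms necessarily coincide. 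This identifies $(T^*M)_{red}$ with $T^*(M/G)$ as symplectic, hence as Poisson, manifolds, completing the proof.
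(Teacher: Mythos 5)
The paper gives no proof of this statement: it is quoted as a classical result with a citation to Abraham--Marsden, so there is no in-paper argument to compare yours against. Your proposal is the standard textbook proof of cotangent-bundle reduction at zero momentum, and it is correct in all its steps: invariance of the tautological form $\theta$ under cotangent lifts, the identity $i_{\fvf^{T^*}_\xi}\theta = j_\xi$ combined with Cartan's formula to get $\fvf^{T^*}_\xi = X_{j_\xi}$ and equivariance, regularity of $0$ from freeness, the identification of $j^{-1}(0)$ with the annihilator of the vertical distribution (equivalently the pullback of $T^*(M/G)$), and the matching of tautological forms to see that the induced diffeomorphism $j^{-1}(0)/G \to T^*(M/G)$ is symplectic. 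The only point you leave implicit is that the cotangent lift of a free and proper action is again free and proper, so that $j^{-1}(0)/G$ is a smooth manifold and the quotient maps are submersions; this is immediate but worth a sentence.
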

The following lemma shows that in the case of a symplectic action on a symplectic manifold  $(M,\omega)$,
due to the  isomorphism $\omega^\flat : TM \to T^*M$ induced by  $\omega$, the reduced tangent bundle produced by Corollary~\ref{cor:SymplecticCase} is isomorphic to the classical reduced cotangent bundle.
\begin{lemma}\label{lem:CotangentTangent}
Given a symplectic action $G\times M \to M$ of a Lie group $G$ on a symplectic manifold $(M,\omega)$ we have
    \[
    (TM)_{red} \cong (T^*M)_{red}.
    \]
\end{lemma}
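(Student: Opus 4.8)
The plan is to reduce the statement to the cotangent bundle reduction Theorem~\ref{thm:CotangentAction}, using the symplectomorphism $\omega^\flat : TM \to T^*M$ to transport everything. First I would recall that in the symplectic case, by Corollary~\ref{cor:SymplecticCase}, the lifted action on $(TM, \dd_T\omega)$ is Hamiltonian with momentum map $c : TM \to \g^*$ given on a fixed $\xi$ by $c_\xi = i_T(i_{\fvf(\xi)}\omega)$, so that $(TM)_{red} = c^{-1}(0)/G$ (by Theorem~\ref{thm:TangentReductionG}). Likewise, by Theorem~\ref{thm:CotangentAction} the cotangent lift is Hamiltonian with momentum map $j : T^*M \to \g^*$ given by $\langle j(\alpha_m),\xi\rangle = \alpha_m(\fvf(\xi)(m))$, and $(T^*M)_{red} = j^{-1}(0)/G \cong T^*(M/G)$.

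The key computational step is to show that the diffeomorphism $\omega^\flat$ intertwines the two momentum maps, i.e. $j \circ \omega^\flat = c$. To see this, evaluate on a tangent vector $v_m \in T_m M$ and pair with $\xi \in \g$:
\[
\langle j(\omega^\flat(v_m)), \xi\rangle = \omega^\flat(v_m)(\fvf(\xi)(m)) = \omega_m(v_m, \fvf(\xi)(m)) = (i_{\fvf(\xi)}\omega)_m(v_m),
\]
while by the definition \eqref{eq:iT} of $i_T$ one has $\langle c(v_m),\xi\rangle = c_\xi(v_m) = i_T(i_{\fvf(\xi)}\omega)(v_m) = \langle (i_{\fvf(\xi)}\omega)(\tau_M(v_m)), v_m\rangle = (i_{\fvf(\xi)}\omega)_m(v_m)$. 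Hence the two expressions agree, so $j\circ\omega^\flat = c$, and in particular $\omega^\flat$ maps $c^{-1}(0)$ diffeomorphically onto $j^{-1}(0)$.

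It remains to check that $\omega^\flat$ is equivariant with respect to the tangent-lifted and cotangent-lifted $G$-actions. This follows because the tangent lift and the cotangent lift of $\Phi$ are both the natural lifts of the same diffeomorphisms $\Phi_g$ of $M$, and $\omega^\flat$ is built from $\omega$, which is $G$-invariant by hypothesis; thus $T\Phi_g$ and $(T^*\Phi_{g^{-1}})$ are intertwined by $\omega^\flat$. Equivariance plus $j\circ\omega^\flat = c$ then gives a well-defined diffeomorphism $c^{-1}(0)/G \to j^{-1}(0)/G$, and since $\omega^\flat$ is a symplectomorphism intertwining the momentum maps, the induced map is a symplectomorphism of the Marsden--Weinstein quotients. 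Therefore $(TM)_{red} \cong (T^*M)_{red}$. The only mildly delicate point is matching the sign/dualization conventions in the cotangent lift (whether one uses $\Phi_g$ or $\Phi_{g^{-1}}$ and hence whether $\omega^\flat$ or $-\omega^\flat$ is equivariant), but this does not affect the existence of the isomorphism of reduced spaces.
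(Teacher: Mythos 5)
Your proposal follows essentially the same route as the paper: compute both momentum maps, relate them via $\omega^\flat$, identify the zero level sets, and invoke $G$-invariance of $\omega$ for equivariance of the identification. The only blemish is a dropped sign in your chain of equalities ($\omega_m(v_m,\fvf(\xi)(m)) = -(i_{\fvf(\xi)}\omega)_m(v_m)$, so in fact $c=-j\circ\omega^\flat$, as in the paper), but as you yourself note this does not affect the zero level sets or the conclusion.
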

\begin{proof}
The momentum map $j:T^*M\to \g^*$ of the cotangent lift of the action is characterized by
    \[
    j_{\xi}(\alpha_m)=\alpha_m (\fvf_\xi (m)),
    \]
for any $m\in M$, $\alpha_m\in T_m^*M$ and $\xi\in\g$. Hence, by writing $\alpha_m=\omega^\flat{X_m}$ with $X_m \in T_m M$ we get
    \[
    j_{\xi}(\omega^\flat{X_m})=(i_{X_m}\omega)_m (\fvf_\xi (m))=\omega_m ( X_m, \fvf_\xi(m)).
    \]
On the other hand, by Corollary~\ref{cor:SymplecticCase} the momentum map $c:TM\to \g^*$ of the tangent lift of the action is characterized by
    \[
    c_{\xi}(X_m)=i_T (i_{\fvf(\xi)}\omega) (X_m)=(i_{\fvf(\xi)}\omega)_m (X_m)= -\omega_m ( X_m, \fvf_\xi (m)).
    \]
Thus we conclude that
    \[
    c=-j  \circ \omega^\flat.
    \]
    As a consequence,
    \[
    j^{-1}(0)= -\omega^\flat(c^{-1}(0)).
    \]
Since the symplectic form is $G$-invariant,  we conclude that
    \[
    j^{-1}(0)/G \cong c^{-1}(0)/G.
    \]
\end{proof}

Note that Lemma~\ref{lem:CotangentTangent} in the particular case when $M$ is a canonical tangent bundle and the $G$-action is a cotangent lift was already proved in \cite[Lemma~4.2]{Garcia2014}.

These results allow us to prove that in the symplectic action of a Lie group on a symplectic manifold the reduced tangent manifold $(TM)_{red}$
is the Lie algebroid of the reduced symplectic groupoid $(\Pi(M))_{red}\rightrightarrows M/G$.% obtained in \cite{Fernandes2009b}.
\begin{theorem}\label{thm:ReducedGroupoid}
 Given a free and proper symplectic action of a Lie group $G$ on a symplectic manifold $(M, \omega)$, we have
 $$
     A((\Pi(M))_{red}) \cong  (TM)_{red}.
 $$
\end{theorem}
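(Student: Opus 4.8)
The plan is to chain together the pieces already established. First I would invoke Theorem~\ref{thm:OidAction} (applied with $\Sigma(M) = \Pi(M)$, which is legitimate since a symplectic manifold is integrable and its source-simply-connected symplectic groupoid is the fundamental groupoid): the lifted $G$-action on $\Pi(M)$ is free, proper and Hamiltonian, the reduced object $(\Pi(M))_{red} = J^{-1}(0)/G$ is a symplectic groupoid integrating $M/G$, and moreover its Lie algebroid $A((\Pi(M))_{red})$ is identified with the cotangent Lie algebroid $T^*(M/G)$. So the right-hand side of what we must prove reduces, via Theorem~\ref{thm:OidAction}, to showing $T^*(M/G) \cong (TM)_{red}$ as Lie algebroids (or at least as vector bundles, if that is all that is claimed).

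Next I would bring in the two reduction identifications on the base side. By Theorem~\ref{thm:CotangentAction}, the classical cotangent bundle reduction gives $(T^*M)_{red} \cong T^*(M/G)$. By Lemma~\ref{lem:CotangentTangent}, the symplectomorphism $\omega^\flat : TM \to T^*M$ descends to an isomorphism $(TM)_{red} \cong (T^*M)_{red}$; indeed the proof of that lemma exhibits $c = -j \circ \omega^\flat$, so $\omega^\flat$ carries $c^{-1}(0)$ onto $j^{-1}(0)$ and, being $G$-equivariant, induces the quotient isomorphism. Combining these two displays yields
\[
    (TM)_{red} \cong (T^*M)_{red} \cong T^*(M/G) \cong A((\Pi(M))_{red}),
\]
which is exactly the assertion.

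The main obstacle — and the step I would want to be careful about — is matching the \emph{algebroid} (not merely vector-bundle) structures through this chain. Theorem~\ref{thm:TangentReductionG} already tells us $(TM)_{red} \to M/G$ is a Lie algebroid obtained as the quotient $C/G$ of the Lie subalgebroid $C = c^{-1}(0) \subseteq TM$ by lifted algebroid automorphisms; on the other side $A((\Pi(M))_{red})$ is a Lie algebroid by general integration theory. I would argue that the isomorphism $\omega^\flat$ is an isomorphism of Lie algebroids $(TM, \text{complete-lift structure}) \to (T^*M, \text{cotangent Lie algebroid})$ because $\omega$ is Poisson and $\omega^\flat = \pi^\sharp{}^{-1}$ intertwines the canonical structures (this is the standard fact that $\omega^\flat$ is an isomorphism of Lie bialgebroids $(TM,T^*M) \to (T^*M, TM)$), hence it restricts to a Lie algebroid isomorphism $C \to j^{-1}(0)$, and quotient Lie algebroids of isomorphic data under equivariant isomorphisms are isomorphic. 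Finally, the identification $(T^*M)_{red} \cong T^*(M/G)$ from Theorem~\ref{thm:CotangentAction} is an isomorphism of cotangent Lie algebroids because it is a symplectomorphism of the reduced symplectic manifold with $T^*(M/G)$ and Lie algebroid structures on cotangent bundles are determined by the symplectic (Poisson) structure. Assembling these refinements upgrades the vector-bundle chain above to a chain of Lie algebroid isomorphisms, completing the proof.
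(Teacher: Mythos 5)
Your proposal is correct and follows essentially the same route as the paper: invoke Theorem~\ref{thm:OidAction} to identify $A((\Pi(M))_{red})$ with $T^*(M/G)$, then chain through Theorem~\ref{thm:CotangentAction} and Lemma~\ref{lem:CotangentTangent} to reach $(TM)_{red}$. Your extra care in checking that the isomorphisms respect the Lie algebroid (not merely vector bundle) structures is a welcome refinement of a point the paper treats more briefly.
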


\begin{proof}
By Corollary~\ref{cor:SymplecticCase}, we know that the lifted action to $TM$ is Hamiltonian with momentum map $c$.
Thus we can perform the reduction of $TM$ obtaining a reduced tangent bundle
\[
    (TM)_{red} = c^{-1}(0)/G
\]
endowed with a symplectic structure.

From Theorem~\ref{thm:OidAction}, we know that the symplectic action
can be lifted to a Hamiltonian action on $\Sigma (M)$ that in this case can be identified with $\Pi(M)$
and the reduced symplectic groupoid is
\[
   (\Pi(M))_{red} := J^{-1}(0)/G \rightrightarrows M/G,
\]
where $J: \Pi(M) \to \g^*$ is the momentum map of the lifted action.
In this case the base manifold $M/G$ is symplectic. Moreover, we have
\begin{equation}\label{eq:RedAlgebroid}
     A((\Pi(M))_{red}) = T^*(M/G).
\end{equation}
Now, by Theorem \ref{thm:CotangentAction}, we have
\begin{equation}\label{eq:RedAlgebroid-ctg}
     A((\Pi(M))_{red}) \cong (T^*M)_{red}.
\end{equation}
Hence, by using Lemma \ref{lem:CotangentTangent} we get
\[
    A((\Pi(M))_{red}) \cong (TM)_{red}.
\]
\end{proof}
Note that  if $(M, \omega)$ is simply connected the corresponding  symplectic groupoid is the pair groupoid:
\[
M \times \bar{M}\rightrightarrows  M,
\]
with symplectic structure $\omega \oplus (-\omega)$.
Thus, we obtain the following result.
\begin{corollary}
    Let $M$ be symplectic and simply connected and let
    $G\times M \to M$ be a symplectic action. Then,
    \[
    A((M \times \bar{M})_{red}) \cong (TM)_{red}.
    \]
\end{corollary}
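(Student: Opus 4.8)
The plan is to deduce this statement directly from Theorem~\ref{thm:ReducedGroupoid}, the only additional input being the standard identification of the fundamental groupoid of a simply connected manifold with its pair groupoid. First I would recall that for a connected manifold $M$ the fundamental groupoid $\Pi(M)\rightrightarrows M$ has as arrows the homotopy classes (rel endpoints) of paths in $M$; when $\pi_1(M)=0$ there is exactly one such class between any two points, so the source--target map $(s,t)\colon\Pi(M)\to M\times M$ is a diffeomorphism intertwining the groupoid multiplications. Hence $\Pi(M)$ is isomorphic, as a Lie groupoid, to the pair groupoid $M\times\bar{M}\rightrightarrows M$.

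Next I would note that this isomorphism is one of \emph{symplectic} groupoids: every symplectic manifold $(M,\omega)$, viewed as a Poisson manifold, is integrable and its canonical (source-simply-connected) symplectic groupoid is $\Pi(M)$ with the multiplicative form $t^*\omega-s^*\omega$, which pushes forward along $(s,t)$ to $\omega\oplus(-\omega)$ on $M\times\bar{M}$, exactly as recalled after Theorem~\ref{thm:ReducedGroupoid}. Moreover this identification is $G$-equivariant for the lifted actions, since in both pictures the lifted action of $G$ is the one canonically induced from the action on $M$ (the diagonal action on $M\times\bar{M}$, which corresponds under $(s,t)$ to the action on $\Pi(M)$ by groupoid automorphisms used in Theorem~\ref{thm:OidAction}).

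Finally, I would invoke Theorem~\ref{thm:ReducedGroupoid}, which gives $A((\Pi(M))_{red})\cong(TM)_{red}$ for a free and proper symplectic action on $(M,\omega)$. Since Marsden--Weinstein reduction is functorial on $G$-equivariant symplectic groupoids, the equivariant isomorphism above yields $(\Pi(M))_{red}\cong(M\times\bar{M})_{red}$ as symplectic groupoids over $M/G$, and therefore $A((M\times\bar{M})_{red})\cong A((\Pi(M))_{red})\cong(TM)_{red}$, as claimed. There is essentially no obstacle here: all the substance resides in Theorem~\ref{thm:ReducedGroupoid}, and the corollary is merely the translation of its conclusion through the elementary fact $\Pi(M)\cong M\times\bar{M}$ for simply connected $M$; the only point requiring a line of care is the compatibility of every identification with the $G$-actions, which is immediate because in each case the action is the canonical lift of the action on $M$.
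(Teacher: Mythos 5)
Your proposal is correct and follows exactly the route the paper takes: the paper's (very brief) justification is precisely that for simply connected $M$ the symplectic groupoid $\Pi(M)$ is the pair groupoid $M\times\bar{M}$ with form $\omega\oplus(-\omega)$, after which Theorem~\ref{thm:ReducedGroupoid} gives the claim. Your added care about the $G$-equivariance of the identification $(s,t)\colon\Pi(M)\to M\times\bar{M}$ is a reasonable (and correct) elaboration of a point the paper leaves implicit.
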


\bibliographystyle{acm}

\end{document}